\numberwithin{equation}{section}
\theoremstyle{plain}
\newtheorem{theorem}{Theorem}[section]
\newtheorem{lemma}[theorem]{Lemma}
\newtheorem{proposition}[theorem]{Proposition}
\theoremstyle{definition}
\newtheorem{example}[theorem]{Example}
\newtheorem*{conventions*}{Conventions}
\newtheorem{remark}[theorem]{Remark}
\newtheorem*{remark*}{Remark}
\newtheorem{subsec}[theorem]{}
\def\cl{{\mathcal L}}
\def\cm{{\mathcal M}}
\def\cu{{\mathcal U}}
\def\mr{{\mathbb R}}
\def\bfc{{\boldsymbol c}}
\def\bfr{{\boldsymbol r}}
\def\Atil{\widetilde{A}}
\def\Ltil{\widetilde{L}}
\def\Util{\widetilde{U}}
\def\Ldtil{\overset{\approx}{L}}
\def\Udtil{\overset{\approx}{U}}
\def\ldtil{\overset{\approx}{l}}
\def\udtil{\overset{\approx}{u}}
\def\OqMtwothreemodboxbox{\Oq(M_{2,3}(k))/ \langle \square
   \kern-.1667em \square \rangle}
\def\rank{\operatorname{Rank}}
\begin{document}

\title[LU decomposition of totally nonnegative matrices]{LU decomposition of totally nonnegative matrices}

\author{K.R. Goodearl}

\address{Department of Mathematics, University of California at Santa Barbara, Santa Barbara, CA 93106, USA}

\email{goodearl@math.ucsb.edu} 

\author{T.H. Lenagan}

\address{Maxwell Institute for Mathematical Sciences, School of Mathematics, University of Edinburgh, JCMB, King's Buildings, Mayfield Road, Edinburgh EH9 3JZ, Scotland, UK}

\email{tom@maths.ed.ac.uk}

\thanks{The research of the first named author was supported
by a grant from the National Science Foundation (USA)}

\subjclass[2000]{Primary 15B48}

\keywords{Totally nonnegative matrices, LU decomposition}

\begin{abstract} A uniqueness theorem for an LU decomposition of a
totally nonnegative matrix is obtained.
\end{abstract}

\maketitle

\setcounter{section}{-1}
\section{Introduction} \label{intro}

An $m\times n$ matrix $M$ with entries from $\mr$ is said to be \emph{totally
nonnegative} if each of its minors is nonnegative. Further, such a matrix is
\emph{totally positive} if each of its minors is strictly positive. (Warning:
in some texts, the terms totally positive and strictly totally positive are
used for our terms totally nonnegative and totally positive, respectively.) 

Totally nonnegative matrices arise in many areas of mathematics and there has
been considerable interest lately in the study of these matrices. For
background information and historical references, there is the newly published
book by Pinkus, \cite{pinkus} and also two good survey articles \cite{ando}
and \cite{fz}. 

In this paper, we are interested in the $LU$ decomposition theory of totally
nonnegative matrices. Cryer, \cite[Theorem 1.1]{cryer2}, has proved that any
totally nonnegative matrix $A$ has a decomposition $A=LU$ with $L$ totally
nonnegative lower triangular and $U$ totally nonnegative upper triangular. If,
in addition, $A$ is square and nonsingular then this decomposition is
essentially unique, see, for example, \cite[pages 50-55]{pinkus}, especially
Theorem 2.10 and Proposition 2.11. However, in the singular case such
decompositions need not be unique, as is pointed out in \cite[Page 91]{cryer1}. 

The aim in this paper is to refine the methods of Cryer, \cite{cryer1,
cryer2}, and Gasca and Pe\~na, \cite{gp}, to produce an $LU$ decomposition for
which there is a uniqueness result. 

A short word concerning the genesis of this result may be interesting to
readers. In a series of recent papers, \cite{gll1, gll2, ll}, a very close
connection has emerged between the theory of totally nonnegative matrices and
the theory of the torus invariant prime ideals of the algebra of quantum
matrices. This opens up the possibility of using results and methods from one
of these areas to produce results in the other. The existence of the results
of this paper was suggested by the tensor product decomposition theorem for
torus invariant prime ideals in quantum matrices obtained in an earlier paper
of the present authors, \cite[Theorem 3.5]{gl-ijm}.

\begin{conventions*} If a matrix is denoted by a given capital Roman letter, its entries will be denoted by the corresponding lower case letter, with subscripts. E.g., the entries of a matrix named $L$ will be denoted $l_{ij}$.

When writing sets of row or column indices, we assume that the indices have been listed in strictly ascending order. 

Recall the standard partial order on index sets of the same cardinality, say  $I:=\{i_1,\dots,i_s\}$ and 
$I':=\{i_1',\dots,i_s'\}$, where $i_1<i_2<\cdots<i_s$ and $i_1'<i_2'<\cdots<i_s' $ according to our convention above. Then: $I\leq I'$ 
if and only if $i_k\le i_k'$ for each 
$k=1,\dots,s$.

If $A$ is a matrix and $I$, $J$ are subsets of row indices and column indices for
$A$ then $A(I,J)$ denotes the submatrix of $A$ obtained by using the rows indexed by $I$
and columns indexed by $J$. If $|I|=|J|$, the minor determined by $A(I,J)$, that is, 
${\rm Det}(A(I,J))$, is denoted by 
$[I| J]_A$, or simply by $[I| J]$ if there is no danger of confusion. By convention, $[\varnothing|\varnothing]_A := 1$ for any matrix $A$.
\end{conventions*}


\section{$LU$ decomposition with specified echelon forms}  \label{section-LU}

We begin by giving an $LU$ decomposition for certain rectangular matrices, in which the matrices $L$ (respectively, $U$) have specified lower (respectively, upper) echelon forms. The specification of the matrices for which this decomposition holds, and the decomposition itself, hold over arbitrary fields, and we keep that generality for this section. In Section \ref{section-neville}, we shall prove that all totally nonnegative real matrices satisfy the required hypotheses, and that for such matrices, the resulting factors $L$ and $U$ are also totally nonnegative (see Theorem \ref{theorem-tnn}).


\begin{subsec}{\bf Echelon forms.} 
We say that a matrix $U= (u_{ij})$ is in \emph{upper echelon form} (or \emph{row echelon form}) if the
following hold:
\begin{enumerate}
\item If the $i$th row of $U$ is nonzero and $u_{ij}$ is the leftmost nonzero entry in this row, then $u_{kl}=0$ whenever both $k>i$ and $l\le j$; 
\item If the $i$th row of $U$ is zero then all the rows below it
are zero.
\end{enumerate}
If, in addition to (1) and (2), there are no zero rows then we say that $U$ is in
\emph{strictly upper echelon form}. 

Similar definitions are made for lower triangular matrices. Namely, a matrix $L= (l_{ij})$ is in \emph{lower echelon form} provided the transpose of $L$ is in upper echelon form, that is:
\begin{enumerate}
\item If the $j$th
column of $L$ is nonzero and $l_{ij}$ is the uppermost
nonzero entry in this column, then $l_{kl}=0$ whenever $k\le i$ and $l>j$; 
\item If the $j$th
column of $L$ is zero then all the columns to the right of it are zero.
\end{enumerate} 
If, in addition to (1) and (2),
there are no zero columns then we say that $L$ is in \emph{strictly lower
echelon form}.

In order to obtain the desired uniqueness results, we need to be more precise
concerning the echelon shapes of matrices as above. 
Let $\bfr  := \{r_1,r_2,\dots,r_t\}$ and 
${\bfc}:=\{c_1,c_2,\dots,c_t\}$, where $1\leq r_1<r_2<\dots<r_t\leq m$ and $1\le c_1<\cdots <c_t\le n$. 
\begin{enumerate}
\item We say that an $m\times t$ 
matrix $L= (l_{ij})$ is in the class $\cl_{\bfr }$ provided that for all $j=1,\dots,t$, we have  $l_{r_jj}\neq 0$ and $l_{ij}=0$ for all $i<r_j$. Further, $L\in\cl_{\bfr }^*$ if also
$l_{r_jj}=1$ for all $j$. Note that all the matrices in $\cl_\bfr$ are in strictly lower echelon form.
\item Similarly, we say that a $t\times n$ matrix $U= (u_{ij})$ 
is in the class $\cu_{\bfc}$ provided that for all $i=1,\dots,t$, we have $u_{ic_i}\neq 0$
and $u_{ij}=0$ for all $j<c_i$. All such matrices are in strictly upper echelon form.
\end{enumerate}
\end{subsec}


\begin{subsec}{\bf Some classes of matrices}

Let $\bfr:= \{r_1,\dots,r_t\}$ and $\bfc:= \{c_1,\dots,c_t\}$ be subsets of $\{1,\dots,m\}$ and $\{1,\dots,n\}$, respectively.
An $m\times n$ 
matrix $A$ is said to be in 
the class $\cm_{{\bfr},{\bfc}}$ provided that 
\begin{enumerate}
\item $\rank(A)=t$;
\item For each $s$ with $s\leq t$, the minor 
$[r_1,r_2,\dots,r_s| c_1,c_2,\dots,c_s]_A$ is nonzero;
\item $[I| J]_A=0$ whenever $|I|=|J|= s\le t$ and either $I \ngeq
\{r_1,\dots,r_s\}$ or $J \ngeq \{c_1,\dots,c_s\}$. 
\end{enumerate}
\end{subsec}

\begin{remark} \label{remark-uniqueclass} 
It is easy to check that a matrix belongs to at most one class 
$\cm_{{\bfr},{\bfc}}$. However, 
in  general, a matrix need not belong to any such class --
consider, for example, the matrix 
$A:= 
\left( 
    \begin{array}{cc} 
    0&1\\
    1&1
    \end{array} 
\right).$

Suppose that $L\in \cl_{\bfr}$ where ${\bfr}:= \{r_1,\dots,r_t\}$. 
Note that 
$$[r_1,\dots,r_s|1,\dots,s]_L =l_{r_1 1}\cdots l_{r_s s}\neq 0,$$
for each $s\leq t$. In particular, $[r_1,\dots,r_t|1,\dots,t]_L \neq 0$, so that $\rank(L)=t$.

Suppose that 
$\{i_1, \dots, i_s\}\ngeq \{r_1,\dots,r_s\}$. Then $i_k<r_k$ for some 
$k$. Thus, any submatrix of the form
$L(\{i_1, \dots, i_s\},J)$ is a lower 
triangular matrix with a zero in the $k$th position on the diagonal; 
and so $[i_1, \dots, i_s|J]_L=0$. Since all $s$-element index sets $J \subseteq \{1,\dots,t\}$ satisfy $J \ge \{1,\dots,s\}$, we thus see that
$L\in \cm_{{\bfr},{[1,t]}}$, where ${[1, t]}:=\{1,\dots,t\}$.

Similarly, any $U\in\cu_{\bfc}$ belongs to $\cm_{[1,t],\bfc}$, where $t= |\bfc|$.
\end{remark}

\begin{lemma} \label{lemma-LrUcinMrc}
Suppose that $L$ is an $m\times t$ matrix in the class
$\cl_{\bfr}$ and that $U$ is a $t\times n$ matrix in
the class $\cu_{\bfc}$. 

{\rm (i)} Let $s\le t$ and let $I$ {\rm(}respectively, $J${\rm)} be an $s$-element subset of $\{1,\dots,m\}$ {\rm(}respectively, $\{1,\dots,t\}${\rm)}. Then $[r_1,\dots,r_s|J]_L \ne 0$ if and only if $J= \{1,\dots,s\}$, and $[I|J]_L=0$ if $I\ngeq \{r_1,\dots,r_s\}$.

{\rm (ii)} Let $s\le t$ and let $I$ {\rm(}respectively, $J${\rm)} be an $s$-element subset of $\{1,\dots,t\}$ {\rm(}respectively, $\{1,\dots,n\}${\rm)}. Then $[I|c_1,\dots,c_s]_U \ne 0$ if and only if $I= \{1,\dots,s\}$, and $[I|J]_U=0$ if $J \ngeq \{c_1,\dots,c_s\}$.

{\rm (iii)} $A:=LU$ is an $m\times n$ matrix in the class $\cm_{{\bfr},{\bfc}}$. 
\end{lemma}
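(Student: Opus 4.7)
The plan is to establish (i) by a direct determinant computation on the relevant submatrices of $L$, obtain (ii) by transposition, and then combine them via the Cauchy--Binet formula to verify the three defining conditions of $\cm_{\bfr,\bfc}$ for $A := LU$.

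For the first half of (i), fix $s \le t$ and an $s$-element $J = \{j_1 < \cdots < j_s\} \subseteq \{1,\dots,t\}$. The $(l,m)$-entry of $L(\{r_1,\dots,r_s\},J)$ is $l_{r_l,j_m}$, which vanishes whenever $r_l < r_{j_m}$, i.e.\ whenever $l < j_m$. Any permutation $\sigma$ producing a nonzero term in the determinant must therefore satisfy $l \ge j_{\sigma(l)}$ for every $l$; summing over $l$ gives $s(s+1)/2 \ge \sum_m j_m$, while $j_m \ge m$ gives the reverse inequality, so equality must hold throughout, forcing $J = \{1,\dots,s\}$ and $\sigma = \mathrm{id}$. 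In that case the submatrix is lower triangular with nonzero diagonal entries $l_{r_k,k}$, yielding the first assertion. For the second, write $I = \{i_1 < \cdots < i_s\}$ with $i_k < r_k$ for some $k$; an analogous count shows that for any $\sigma$ giving a nonzero term and any $l \le k$, $i_l \le i_k < r_k$ forces $j_{\sigma(l)} < k$ and hence $\sigma(l) < k$ (using $j_m \ge m$), so $\sigma$ would map $\{1,\dots,k\}$ into $\{1,\dots,k-1\}$, which is impossible. Statement (ii) follows by applying (i) to $U^T$, which lies in $\cl_{\bfc}$ after interchanging the roles of rows and columns.

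For (iii), I would apply Cauchy--Binet: for $s \le t$ and any $s$-element $I \subseteq \{1,\dots,m\}$, $J \subseteq \{1,\dots,n\}$,
$$[I|J]_A \;=\; \sum_{K} [I|K]_L\, [K|J]_U,$$
where $K$ runs over $s$-element subsets of $\{1,\dots,t\}$. If $I \ngeq \{r_1,\dots,r_s\}$, every $[I|K]_L$ vanishes by (i); if $J \ngeq \{c_1,\dots,c_s\}$, every $[K|J]_U$ vanishes by (ii); this verifies condition (3) of $\cm_{\bfr,\bfc}$. Taking $I = \{r_1,\dots,r_s\}$ and $J = \{c_1,\dots,c_s\}$, parts (i) and (ii) collapse the sum to the single term $K = \{1,\dots,s\}$, which equals $(l_{r_1,1} \cdots l_{r_s,s})(u_{1,c_1} \cdots u_{s,c_s}) \ne 0$; this is condition (2), and with $s = t$ it shows $\rank(A) \ge t$. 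The reverse inequality $\rank(A) \le t$ is automatic since $A$ factors through a $t$-dimensional space, giving condition (1).

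The main obstacle is the index bookkeeping in the permutation/summation argument for (i); once that is in hand, (iii) reduces to a routine Cauchy--Binet computation with no further difficulty.
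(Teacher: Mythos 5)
Your proof is correct and follows essentially the same route as the paper: a direct verification of (i) from the zero pattern of $L$, part (ii) by transposition, and part (iii) via the Cauchy--Binet identity, checking the three defining conditions of $\cm_{\bfr,\bfc}$ in the same way. The only cosmetic difference is that for (i) you expand the minors as sums over permutations and argue by counting, whereas the paper observes that the relevant submatrices are lower triangular with a zero diagonal entry; the two arguments amount to the same verification.
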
 

\begin{proof} (i) We already have $[r_1,\dots,r_s|1,\dots,s]_L \ne 0$, and $[I|J]_L=0$ for $I\ngeq \{r_1,\dots,r_s\}$, by Remark \ref{remark-uniqueclass}. If  $J= \{j_1,\dots,j_s\}$ and  $J\ne \{1,\dots,s\}$, then some $j_k>k$, whence $r_{j_k}> r_k$. In this case, $L(\{r_1,\dots,r_s\},J)$ is a lower triangular matrix whose $k,k$ entry is zero, and so $[r_1,\dots,r_s|J]_L =0$.

(ii) This is proved symmetrically.

(iii) First, $\rank(A)\leq t$, as $A$ is the product of an
$m\times t$ matrix and a $t\times n$ matrix. However, by the Cauchy-Binet identity
(Lemma~\ref{lemma-cauchy-binet}), 
\[
[r_1,\dots,r_t|c_1,\dots,c_t]_A\quad
=\quad [r_1,\dots,r_t|1,\dots,t]_L[1,\dots,t|c_1,\dots,c_t]_U \ne 0,
\]
so $\rank(A)=t$. 

For any $s\leq t$, by the Cauchy-Binet identity together with (i), 
\begin{eqnarray*}
[r_1,\dots,r_s|c_1,\dots,c_s]_A
&=&
\sum_K\, [r_1,\dots,r_s|K]_L[K|c_1,\dots,c_s]_U \\
&=& 
[r_1,\dots,r_s|1,\dots, s]_L[1,\dots,s|c_1,\dots,c_s]_U \ne 0.
\end{eqnarray*} 

Now, suppose that we have a row index set $I \ngeq 
\{r_1,\dots,r_s\}$. For any $s$-element subset $K$ of $\{1,\dots,t\}$, we have $[I|K]_L =0$ by (i), and therefore, for any $s$-element subset $J$ of $\{1,\dots,n\}$, Lemma~\ref{lemma-cauchy-binet} implies that $[I|J]_A
=
\sum_K\, [I|K]_L[K|J]_U =0$. Similarly, $[I|J]_A =0$ for any $I$, $J$ with $|I|= |J|= s\le t$ and $J \ngeq \{c_1,\dots,c_s\}$. Therefore $A\in \cm_{{\bfr},{\bfc}}$.
\end{proof}


The following theorem gives an explicit $LU$ decomposition for matrices in the classes $\cm_{\bfr, \bfc}$. Uniqueness of these decompositions will be proved once existence has been established.

\begin{theorem} \label{theorem-explicit} 
Let $A$ be an $m\times n$ matrix which belongs
to the class $\cm_{\bfr, \bfc}$ where ${\bfr} :=\{r_1,\dots,r_t\}$ and ${\bf
c} :=\{c_1,\dots,c_t\}$. 

Set $L:=(l_{ij})$ and $U:=(u_{ij})$ to be the $m\times t$ and $t\times n$ matrices, respectively, 
with entries as follows: $l_{ij} :=0$ for $i<r_j$  and
\[ l_{ij} := [r_1,r_2,\dots,r_{j-1},i|c_1,c_2,\dots,c_j]_A
[r_1,r_2,\dots,r_j|c_1,c_2,\dots,c_j]_A^{-1}
\]
for $i\geq r_j$, while $u_{ij} := 0$ for $j<c_i$ and 
\[
u_{ij} := [r_1,r_2,\dots,r_i|c_1,c_2,\dots,c_{i-1},j]_A
[r_1,r_2,\dots,r_{i-1}|c_1,c_2,\dots,c_{i-1}]_A^{-1}
\]
for $j\geq c_i$.

Then $L$ belongs to the
class $\cl_{\bfr}^*$, while 
$U$ belongs to the
class $\cu_{\bfc}$, and
$A=LU$. 
\end{theorem}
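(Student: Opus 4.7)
My plan has two main components: a straightforward verification of the shape claims for $L$ and $U$, and a more substantive entry-by-entry proof that $A=LU$, driven by a telescoping recurrence built from the Desnanot--Jacobi determinantal identity. For the shapes, condition (2) in the definition of $\cm_{\bfr,\bfc}$ guarantees that the denominators $[r_1,\dots,r_j|c_1,\dots,c_j]_A$ and $[r_1,\dots,r_{i-1}|c_1,\dots,c_{i-1}]_A$ (the latter equal to $1$ when $i=1$) are nonzero, so the ratios are well-defined. Substituting $i=r_j$ into the formula for $l_{ij}$ collapses numerator and denominator to give $l_{r_j,j}=1$, while $l_{ij}=0$ for $i<r_j$ holds by definition, placing $L$ in $\cl_\bfr^*$; symmetrically $u_{i,c_i}\neq 0$ and $u_{ij}=0$ for $j<c_i$, so $U\in\cu_\bfc$.

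For $A=LU$, I fix an entry $(i,j)$ and set $p:=\max\{k:r_k\le i\}$, $q:=\max\{k:c_k\le j\}$ (each interpreted as $0$ if the set is empty), and $s^*:=\min(p,q)$. Since $l_{ik}=0$ for $k>p$ and $u_{kj}=0$ for $k>q$, the sum $\sum_{k=1}^{t}l_{ik}u_{kj}$ truncates to $\sum_{k=1}^{s^*}l_{ik}u_{kj}$. The case $s^*=0$ is immediate: $(LU)_{ij}=0$, and $a_{ij}=[i|j]_A=0$ by condition (3), since $\{i\}\ngeq\{r_1\}$ or $\{j\}\ngeq\{c_1\}$. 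For $s^*\ge 1$, I introduce the ``partial Schur complements''
\[
\Delta_s \;:=\; \frac{[r_1,\dots,r_s,i\,|\,c_1,\dots,c_s,j]_A}{[r_1,\dots,r_s\,|\,c_1,\dots,c_s]_A}, \qquad 0\le s\le s^*,
\]
with the bracketed numerator read as the determinant of the submatrix with rows and columns listed in that order (so it vanishes automatically if $i$ duplicates some $r_k$ or $j$ duplicates some $c_k$). By construction the indices are strictly increasing whenever $s<s^*$, and $\Delta_0=a_{ij}$. The engine of the proof is the telescoping recurrence
\[
\Delta_s \;=\; \Delta_{s-1} - l_{is}u_{sj}, \qquad 1\le s\le s^*,
\]
which I derive by applying the Desnanot--Jacobi (Sylvester) determinantal identity to the $(s+1)\times(s+1)$ submatrix of $A$ with rows $r_1,\dots,r_s,i$ and columns $c_1,\dots,c_s,j$, using the two consecutive corner indices $s$ and $s+1$, and then dividing through by the product $[r_1,\dots,r_s|c_1,\dots,c_s]_A\cdot[r_1,\dots,r_{s-1}|c_1,\dots,c_{s-1}]_A$.

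Iterating yields $\Delta_{s^*}=a_{ij}-\sum_{k=1}^{s^*}l_{ik}u_{kj}$, so it remains to verify $\Delta_{s^*}=0$. If $s^*=t$, the numerator is a $(t+1)\times(t+1)$ minor of the rank-$t$ matrix $A$ and vanishes by condition (1). Otherwise $s^*<t$, and either $s^*=p$, in which case $r_{s^*+1}>i$ forces $\{r_1,\dots,r_{s^*},i\}\ngeq\{r_1,\dots,r_{s^*+1}\}$, or $s^*=q$, in which case the symmetric column inequality fails; in either case condition (3) kills the numerator. This gives $a_{ij}=(LU)_{ij}$ and completes the proof. The main obstacle is the Desnanot--Jacobi step: one must pick the correct submatrix and consecutive corner indices so that, after dividing, the resulting identity matches \emph{exactly} the ratios defining $l_{is}$ and $u_{sj}$, together with some minor bookkeeping to reconcile the bracket notation with the case in which the listed row or column indices are not strictly ordered.
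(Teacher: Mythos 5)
Your proof is correct, but it takes a genuinely different route from the paper's. The paper argues by induction on $\min\{m,n\}$: after disposing of the case $a_{11}=0$ (where a zero first row or column is stripped off), it factors out the first pivot, forms the Schur complement $D$, uses Sylvester's identity to show that $D\in\cm_{\bfr',\bfc'}$ and that the explicit $L$- and $U$-entries attached to $D$ are precisely shifts of those attached to $A$, and then assembles $A=LU$ from the inductive factorization of $D$. You instead verify $A=LU$ entry by entry: after truncating $\sum_k l_{ik}u_{kj}$ at $s^*=\min(p,q)$, you telescope the quantities $\Delta_s$ via $\Delta_s=\Delta_{s-1}-l_{is}u_{sj}$, which is exactly the Desnanot--Jacobi identity (Lemma \ref{lemma-sylvester} in the case $n-m=2$) applied to the bordered submatrix on rows $r_1,\dots,r_s,i$ and columns $c_1,\dots,c_s,j$, divided by the two leading principal minors that condition (2) guarantees are nonzero; the terminal value $\Delta_{s^*}$ then vanishes by condition (1) or (3) (or trivially, when a listed row or column index repeats). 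Your computation is more direct, requires no induction on the size of $A$, and produces the closed-form identity $a_{ij}=\sum_{k\le s^*}l_{ik}u_{kj}$ in one pass, at the cost of the bookkeeping you rightly flag (the two ratios emerging from Desnanot--Jacobi must be matched exactly with the defining formulas for $l_{is}$ and $u_{sj}$, and degenerate index lists must be read as zero determinants). The paper's induction avoids that bookkeeping but needs the separate zero-row/zero-column case and the verification that the Schur complement stays in the correct class. Both arguments ultimately rest on Sylvester's identity, just in different specializations: bordering by a single pivot in the paper, versus the two-by-two (Dodgson condensation) case in yours.
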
 

\begin{proof}
It is obvious from the definitions that $L\in\cl_{\bfr}^*$ and 
$U\in\cu_{\bfc}$; so we need to prove that $A=LU$. 
The proof is by induction on $\min\{m,n\}$ with
the cases where $m=1$ or $n=1$ being trivial.  In this proof, any 
minor $[I|J]$ without a subscript is a minor of $A$; that is, 
$[I|J]=[I|J]_A$. Minors of other matrices are given subscripts. 

Assume that $m,n\ge2$, and suppose first that $a_{11}=0$. Then either $r_1>1$ or $c_1>1$. It follows that
either the first row or first column of $A$ is zero, because $A\in\cm_{\bf
r,\bfc}$. Suppose that the first row of $A$ is zero, in which case 
$r_1>1$. Let 
$\Atil$ be the $(m-1)\times n$ matrix obtained from $A$ by deleting 
the first row.
Then $\Atil\in \cm_{\bfr',\bfc}$ where 
${\bfr}' := \{r_1-1,\dots,r_t-1\}$. By using the inductive hypothesis, there are 
matrices $\Ltil$, $\Util$, with entries as specified above, 
such that $\Atil=\Ltil\Util$. Note that $\Util=U$.

Now, 
\[
A= 
 \left( 
    \begin{array}{ccc} 
    0&\cdots&0\\  \hline
    &&\\
    &\Atil&\\
    &&\\
        \end{array} 
\right)=
\left( 
    \begin{array}{ccc} 
    0&\cdots&0\\  \hline
        &&\\
    &\Ltil&\\
    &&\\
        \end{array} 
\right)
\Util
\]
and it is easy to check that $\left( 
    \begin{array}{ccc} 
    0&\cdots&0\\  \hline
        &&\\
    &\Ltil&\\
    &&\\
        \end{array} 
\right) = L$.

The case where the first column of $A$ is zero is dealt with in a similar
way.

Next, assume that $a_{11}\neq 0$ and note that $r_1=c_1=1$ in this case. 
Then, by elementary row operations using $a_{11}$ as the pivot, we see that
$A=\Ltil\Atil$, where

\[
 \Ltil=
\left( 
    \begin{array}{c|ccc} 
    1&0&\cdots&0\\  \hline
    a_{21}a_{11}^{-1}&&&\\
    \vdots&&I&\\
    a_{m1}a_{11}^{-1}&&&
    \end{array} 
\right) ,
\quad \qquad
\Atil=
\left( 
    \begin{array}{c|ccc} 
    a_{11}&a_{12}&\cdots&a_{1n}\\  \hline 
    0&&&\\
    \vdots&&D&\\
    0&&&
    \end{array} 
\right) ,
\]
and $D=(d_{ij})$ is the
$(m-1)\times (n-1)$ matrix with entries
$$d_{ij} := a_{i+1,j+1}-a_{i+1,1}a_{11}^{-1}a_{1,j+1} = 
[1,i+1|1,j+1][1|1]^{-1}\,.$$
Also, set  $B:= \bigl( [1,i+1|1,j+1] \bigr)$, so that $D= [1|1]^{-1} B$. 

Let $\{i_1,\dots,i_s\}$ and  $\{j_1,\dots,j_s\}$ be subsets of $\{1,\dots,t-1\}$. Then 
\[
[i_1,\dots, i_s|j_1,\dots,j_s]_B = [1 ,i_1+1,\dots, i_s+1 |1 ,j_1+1,\dots,j_s+1][1|1]^{s-1} \,,
\]
by Sylvester's identity (Lemma \ref{lemma-sylvester}). 
It follows that 
\begin{equation}
  \tag{*}
 \begin{aligned}
{}[i_1,\dots, i_s |j_1,\dots,j_s]_D &= [i_1,\dots, i_s |j_1,\dots,j_s]_B [1|1]^{-s}   \\
 &= [1 ,i_1+1,\dots, i_s+1 |1 ,j_1+1,\dots,j_s+1] [1|1]^{-1} \,. 
\end{aligned}
 \end{equation}
From this, it follows that 
$D$ belongs to the class 
$\cm_{\bfr', \bfc'}$ where 
${\bfr'} :=\{r_2-1,\dots,r_t-1\}$ and ${\bfc'} := \{c_2-1,\dots,c_t-1\}$.

By induction, there are $(m-1)\times(t-1)$ and $(t-1)\times(n-1)$ matrices $\Ldtil=(\ldtil_{ij})$ and $\Udtil= (\udtil_{ij})$ such that $D=\Ldtil\Udtil$, with $\ldtil_{ij} =0= l_{i+1,j+1}$ for $i<r_{j+1}-1$ and
\begin{align*}
\ldtil_{ij} &= [r_2{-}1,\dots,r_j{-}1,i | c_2{-}1,\dots,c_{j+1}{-}1]_D [r_2{-}1,\dots,r_{j+1}{-}1 | c_2{-}1,\dots,c_{j+1}{-}1]_D^{-1}  \\
 &= [1,r_2,\dots,r_j,i{+}1 | 1,c_2,\dots,c_{j+1}] [1|1]^{-1} [1,r_2,\dots,r_{j+1} | 1,c_2,\dots,c_{j+1}]^{-1} [1|1]  \\
 &= l_{i+1,j+1}
 \end{align*}
for $i\ge r_{j+1}-1$; while $\udtil_{ij}=0= u_{i+1,j+1}$ for $j<c_{i+1}-1$ and
\begin{align*}
\udtil_{ij} &= [r_2{-}1,\dots,r_{i+1}{-}1 | c_2{-}1,\dots,c_i{-}1,j]_D [r_2{-}1,\dots,r_i{-}1 | c_2{-}1,\dots,c_i{-}1]_D^{-1}  \\
 &= [1,r_2,\dots,r_{i+1} | 1,c_2,\dots,c_i,j{+}1] [1|1]^{-1} [1,r_2,\dots,r_i | 1,c_2,\dots,c_i]^{-1} [1|1]  \\
 &= u_{i+1,j+1}
 \end{align*}
for $j\ge c_{i+1}-1$, by using $(*)$ above.

Now, observe that 
$$A= \Ltil\Atil=
\left( 
    \begin{array}{c|ccc} 
    1&0&\cdots&0\\  \hline
    a_{21}a_{11}^{-1}&&&\\
    \vdots&&I&\\
    a_{m1}a_{11}^{-1}&&&
    \end{array} 
\right)
\left( 
    \begin{array}{c|ccc} 
    1&0&\cdots&0\\  \hline
    0&&&\\
    \vdots&&\Ldtil&\\
    0&&&
    \end{array} 
\right)
\left( 
    \begin{array}{c|ccc} 
    a_{11}&a_{12}&\cdots&a_{1n}\\  \hline 
    0&&&\\
    \vdots&&\Udtil&\\
    0&&&
    \end{array} 
\right)  .$$
From our calculations of the entries of $\Ldtil$ and $\Udtil$ above, we see that
\[
L=
\left( 
    \begin{array}{c|ccc} 
    1&0&\cdots&0\\  \hline
    a_{21}a_{11}^{-1}&&&\\
    \vdots&&\Ldtil&\\
    a_{m1}a_{11}^{-1}&&&
    \end{array} 
\right)
\qquad\quad{\rm and}\qquad\quad U=\left( 
    \begin{array}{c|ccc} 
    a_{11}&a_{12}&\cdots&a_{1n}\\  \hline 
    0&&&\\
    \vdots&&\Udtil&\\
    0&&&
    \end{array} 
\right),
\]
and therefore $A=LU$. This completes the inductive step.
\end{proof}


\begin{theorem} \label{theorem-uniquelu}
Suppose that $A$ is in the class $\cm_{{\bfr},{\bfc}}$ 
where ${\bfr} := \{r_1,r_2,\dots,r_t\}$ and 
${\bfc}:=\{c_1,c_2,\dots,c_t\}$.  There are unique matrices $L\in \cl_{\bfr}^*$ and $U\in \cu_{\bfc}$ such that $A=LU$, namely those given in Theorem {\rm\ref{theorem-explicit}}.
\end{theorem}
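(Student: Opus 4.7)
The plan is to show that any factorization $A = LU$ with $L \in \cl_{\bfr}^*$ and $U \in \cu_{\bfc}$ must have entries matching the explicit formulas in Theorem \ref{theorem-explicit}; since existence is already established there, this yields uniqueness.

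The key tool is the Cauchy-Binet identity applied to $A = LU$: for $s$-subsets $I \subseteq \{1,\dots,m\}$ and $J \subseteq \{1,\dots,n\}$ with $s \le t$, we have
\[
[I|J]_A \;=\; \sum_K\, [I|K]_L \, [K|J]_U,
\]
where $K$ ranges over $s$-subsets of $\{1,\dots,t\}$. By Lemma \ref{lemma-LrUcinMrc}(i), $[r_1,\dots,r_s|K]_L \neq 0$ forces $K = \{1,\dots,s\}$, and symmetrically by (ii), $[K|c_1,\dots,c_s]_U \neq 0$ forces $K = \{1,\dots,s\}$. So whenever the row index set of $A$ has the form $\{r_1,\dots,r_s\}$ (possibly with an extra row appended) or the column index set has the form $\{c_1,\dots,c_s\}$ (possibly with an extra column appended), the Cauchy-Binet sum collapses to a single term.

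First I would take $I = \{r_1,\dots,r_s\}$ and $J = \{c_1,\dots,c_s\}$. Since $L \in \cl_{\bfr}^*$ gives $[r_1,\dots,r_s|1,\dots,s]_L = l_{r_1,1}\cdots l_{r_s,s} = 1$, the sum collapses to
\[
[r_1,\dots,r_s|c_1,\dots,c_s]_A \;=\; [1,\dots,s|c_1,\dots,c_s]_U,
\]
pinning down these ``leading'' minors of $U$ in terms of $A$; note they are nonzero since $A \in \cm_{\bfr,\bfc}$. Next, for $i \ge r_j$, I would apply Cauchy-Binet with $I = \{r_1,\dots,r_{j-1},i\}$ and $J = \{c_1,\dots,c_j\}$; the submatrix of $L$ picked out by the only surviving index set $K = \{1,\dots,j\}$ is lower triangular with diagonal $(1,\dots,1,l_{ij})$, so its determinant is exactly $l_{ij}$. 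Dividing by the already-computed $[1,\dots,j|c_1,\dots,c_j]_U = [r_1,\dots,r_j|c_1,\dots,c_j]_A$ recovers the formula for $l_{ij}$ from Theorem \ref{theorem-explicit}. The formula for $u_{ij}$ (when $j \ge c_i$) is obtained symmetrically by taking $I = \{r_1,\dots,r_i\}$ and $J = \{c_1,\dots,c_{i-1},j\}$; the relevant submatrix of $U$ is upper triangular with last diagonal entry $u_{ij}$, and the preceding product of diagonal entries equals $[1,\dots,i-1|c_1,\dots,c_{i-1}]_U = [r_1,\dots,r_{i-1}|c_1,\dots,c_{i-1}]_A$.

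There is no substantive obstacle here: the whole argument rests on the collapse of the Cauchy-Binet sums to a single surviving term via Lemma \ref{lemma-LrUcinMrc}, which is immediate. The only points that require mild bookkeeping are (a) verifying that entries of $L$ with $i < r_j$ and of $U$ with $j < c_i$ are already forced to $0$ by membership in $\cl_{\bfr}^*$ and $\cu_{\bfc}$, and (b) observing that the denominators $[r_1,\dots,r_j|c_1,\dots,c_j]_A$ and $[r_1,\dots,r_{i-1}|c_1,\dots,c_{i-1}]_A$ appearing in the formulas are nonzero by condition (2) in the definition of $\cm_{\bfr,\bfc}$.
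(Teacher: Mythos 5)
Your argument is correct, but it takes a different route from the paper's. The paper proves uniqueness by a purely entry-level induction: from $a_{r_1j}=\sum_k l_{r_1k}u_{kj}=u_{1j}$ it reads off the first row of $U$, then the first column of $L$ from $a_{ic_1}=l_{i1}u_{1c_1}$, and inductively determines the $(s+1)$-st row of $U$ and column of $L$ from the entries already fixed, using only that $l_{r_kk}=1$, that $l_{r_{s+1},k}=0$ for $k>s+1$, and that $u_{s+1,c_{s+1}}\neq 0$. It never touches higher minors, and the identification with the explicit formulas comes only indirectly, via the existence statement of Theorem \ref{theorem-explicit}. You instead run the full $s\times s$ Cauchy--Binet identity on an arbitrary factorization $A=LU$ with $L\in\cl_{\bfr}^*$, $U\in\cu_{\bfc}$, collapse the sum to the single term $K=\{1,\dots,s\}$ via Lemma \ref{lemma-LrUcinMrc}(i),(ii), and read off each $l_{ij}$ and $u_{ij}$ as a ratio of minors of $A$ (the triangularity of $L(\{r_1,\dots,r_{j-1},i\},\{1,\dots,j\})$ and of $U(\{1,\dots,i\},\{c_1,\dots,c_{i-1},j\})$, together with $[1,\dots,s|J]_U=[r_1,\dots,r_s|J]_A$, is exactly what makes this work, and the denominators are nonzero by condition (2) of $\cm_{\bfr,\bfc}$). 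Your approach is slightly heavier machinery but buys something: it yields the explicit formulas of Theorem \ref{theorem-explicit} directly from uniqueness considerations alone, so in principle it gives an independent derivation of those formulas rather than relying on the inductive existence proof to identify them. The paper's induction is more elementary and shorter, at the cost of leaving the closed forms implicit.
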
 

\begin{proof} 
We show that if $A= (a_{ij})$ is in the class $\cm_{{\bfr},{\bfc}}$  and $A=LU$ with 
$L= (l_{ij}) \in \cl_{\bfr}^*$ and $U= (u_{ij}) \in \cu_{\bfc}$, then the entries 
of $L$ and $U$ can be uniquely
specified from this information. The result then follows.

Note that the equations
\[
a_{r_1j}=(LU)_{r_1j}=\sum_{k=1}^t\,l_{r_1k}u_{kj}=l_{r_11}u_{1j}
=u_{1j}
\]
specify the first row of $U$. 
Similarly, 
\[
a_{ic_1} = (LU)_{ic_1}= 
        \sum_{k=1}^t\,l_{ik}u_{kc_1}=l_{i1}u_{1c_1}
        \]
for all $i$, which specifies the first column of $L$, as $u_{1c_1}\neq 0$. 
        
Assume as an  inductive hypothesis 
that the first $s$ rows of $U$ and the first $s$ columns 
of $L$ have been specified. 
Then 
\begin{align*}
a_{r_{s+1}j} &=(LU)_{r_{s+1}j}
=\sum_{k=1}^t\,l_{r_{s+1}k}u_{kj}
=\sum_{k=1}^s\,l_{r_{s+1}k}u_{kj}+l_{r_{s+1}s+1}u_{s+1 , j}  \\
 &= \sum_{k=1}^s\,l_{r_{s+1}k}u_{kj}+u_{s+1 , j}
\end{align*}
for all $j$, which specifies the $(s+1)$-st row of $U$ because the terms in the last
summation are already known by induction. 

Finally, 
\[
a_{ic_{s+1}}=(LU)_{ic_{s+1}}
=\sum_{k=1}^t\,l_{ik}u_{k,c_{s+1}}
=\sum_{k=1}^s\,l_{ik}u_{k,c_{s+1}}+l_{i, s+1}u_{s+1 , c_{s+1}}
\]
for all $i$, which specifies the $(s+1)$-st column of $L$ because the terms in the 
summation  on the 
right are already known by induction and $u_{s+1 , c_{s+1}}\neq 0$. 

This finishes the inductive step and so the result is proved.  
\end{proof}


\section{Modified Neville elimination} \label{section-neville}

We now restrict attention to real matrices and focus on total nonnegativity. Our aim is to show that any $m\times n$ totally nonnegative matrix $A$ lies in one of the classes $\cm_{{\bfr},{\bfc}}$, and that the matrices $L$ and $U$ in the decomposition $A=LU$ of Theorem \ref{theorem-explicit} are totally nonnegative. While it is possible to prove directly that these matrices are totally nonnegative, it is technically much less complicated to obtain that result via a modification of the Neville elimination process of Gasca and Pe\~na,
\cite{gp}.  

The following is an elementary, but crucial, fact about totally nonnegative 
matrices.

\begin{lemma} \label{lemma-cauchon} 
Suppose that $A= (a_{ij})$ is a totally nonnegative $m\times n$ matrix, and that $i<k\le m$ and $j<l\le n$. 
If $a_{ij}=0$ then either $a_{il}=0$ or $a_{kj}=0$. As a consequence, 
if some entry in the $j$th column, but below the $ij$ entry, is nonzero then 
all elements in the $i$th row, but to the right of the $ij$ entry, are also zero. 
\end{lemma}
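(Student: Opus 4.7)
The plan is to exploit the $2\times 2$ minor supported by rows $\{i,k\}$ and columns $\{j,l\}$. This minor equals $a_{ij}a_{kl} - a_{il}a_{kj}$, and by total nonnegativity of $A$ it is nonnegative. Substituting the hypothesis $a_{ij}=0$ reduces the inequality to $-a_{il}a_{kj} \ge 0$, that is, $a_{il}a_{kj}\le 0$. At the same time, the individual entries $a_{il}$ and $a_{kj}$ are themselves $1\times 1$ minors of $A$, hence nonnegative, so $a_{il}a_{kj}\ge 0$. The two inequalities together force $a_{il}a_{kj}=0$, and since we work over $\mr$ this means $a_{il}=0$ or $a_{kj}=0$, which is the first assertion.

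For the consequence, assume there exists $k$ with $i<k\le m$ such that $a_{kj}\neq 0$. Then for every $l$ with $j<l\le n$, the first assertion rules out the alternative $a_{kj}=0$ and therefore forces $a_{il}=0$. Thus the entire portion of the $i$th row strictly to the right of the $ij$ entry vanishes.

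There is essentially no obstacle in this argument: it is a direct $2\times 2$ calculation. The only care required is to observe that the relevant entries $a_{il}$ and $a_{kj}$ are themselves minors, so that nonnegativity pins down the sign of their product from both sides and forces equality with zero.
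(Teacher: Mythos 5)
Your argument is correct and is essentially the same as the paper's: both rest on the single observation that $0\le [ik|jl]=a_{ij}a_{kl}-a_{il}a_{kj}=-a_{il}a_{kj}$ while $a_{il},a_{kj}\ge 0$, forcing $a_{il}a_{kj}=0$. The deduction of the consequence (fixing $k$ with $a_{kj}\ne 0$ and letting $l$ range) is the intended, immediate one.
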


\begin{proof} 
Note that $0\leq [ik|jl] = a_{ij}a_{kl}-a_{il}a_{kj}= -a_{il}a_{kj}$; so that
$a_{il}a_{kj}\leq 0$. As $a_{il},a_{kj}\geq 0$ this gives the desired
conclusion. 
\end{proof} 

First, we give an informal description of the elimination process that we will
use. We start with a totally nonnegative matrix. The aim is to use a version
of the Neville elimination procedure to produce a final matrix in echelon form
with no zero rows. If a zero row appears at any stage in the process then we
delete it (rather than moving it to the bottom as in ordinary Neville
elimination). Otherwise, we proceed as with Neville elimination: if we are
clearing the lower entries in a given column and want to perform a row
operation to replace the last nonzero entry in a column by zero, then we
perform a row operation by subtracting a suitable multiple of the row
immediately above this last position. Note that the entry immediately above
this last position will be nonzero: this is guaranteed by the above lemma. In
the end we produce an upper triangular matrix $U$ in echelon form which
contains no zero rows. Keeping track of the operations performed produces a
lower triangular matrix $L$ such that $A=LU$. We also show that each of $L$
and $U$ is totally nonnegative. \\


\begin{subsec}{\bf Invariants of the  elimination algorithm} 

The modified Neville algorithm starts with $L:= I$ and $U:=A$, 
a totally nonnegative matrix, and 
uses two moves: (i) either delete a row of zeros of $U$ and the 
corresponding column in $L$, or (ii) 
perform a Neville elimination move. 

The first aim is to show that at all times during the modified Neville
algorithm we retain the totally nonnegative condition for $L$ and $U$ 
and the fact that
$A=LU$. There are two moves to consider. The first deletes a row of $U$ and
the corresponding column of $L$. Note that if we delete a row or column from a
totally nonnegative matrix then the new matrix is also totally nonnegative. 
\end{subsec}

\begin{lemma} \label{lemma-delete1} 
Let $B$ be an $m\times p$ matrix and let $C$ be a $p\times n$ matrix. Suppose that 
row $i$ of $C$ is zero. Set $B'$ to be the 
$m\times (p-1)$ matrix obtained by deleting the $i$th column of $B$ and set 
$C'$ to be the $(p-1)\times n$ matrix obtained by deleting the $i$th row of 
$C$. Then $B'C'=BC$.
\end{lemma}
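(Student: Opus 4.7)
The approach is a direct computation with matrix entries. The plan is to fix arbitrary indices $j \in \{1,\dots,m\}$ and $k \in \{1,\dots,n\}$, and to compare the $(j,k)$ entries of $BC$ and $B'C'$. By definition,
\[
(BC)_{jk} = \sum_{l=1}^{p} b_{jl} c_{lk}.
\]
Since row $i$ of $C$ is zero, we have $c_{ik} = 0$ for every $k$, so the $l=i$ term in the above sum vanishes. Splitting off that term and reindexing the remaining sum then identifies it with $(B'C')_{jk}$.

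More concretely, I would write the reindexing explicitly: the columns of $B'$ are $b'_{j,l} = b_{j,l}$ for $l<i$ and $b'_{j,l} = b_{j,l+1}$ for $l \ge i$, and similarly for the rows of $C'$. Then
\[
(B'C')_{jk} = \sum_{l=1}^{i-1} b_{jl} c_{lk} + \sum_{l=i+1}^{p} b_{jl} c_{lk} = \sum_{l=1}^{p} b_{jl} c_{lk} - b_{ji} c_{ik} = (BC)_{jk},
\]
using $c_{ik}=0$ in the last step. Since $j$ and $k$ were arbitrary, this gives $B'C' = BC$.

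There is really no obstacle here: the lemma is a bookkeeping observation about how a zero row in the second factor makes the corresponding column of the first factor irrelevant. No properties of totally nonnegative matrices or of the classes $\cl_{\bfr}$, $\cu_{\bfc}$ are used, which is appropriate since this lemma will be invoked inside the modified Neville algorithm precisely to justify that deleting a zero row of $U$ together with the corresponding column of $L$ preserves the identity $A = LU$.
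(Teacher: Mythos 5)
Your proof is correct; the paper itself dismisses this lemma with the single word ``Obvious,'' and your entrywise computation is exactly the routine verification being alluded to. Nothing further is needed.
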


\begin{proof} Obvious. 
\end{proof} 

\begin{lemma} \label{lemma-delete2} 
{\rm(i)} Suppose that ${\bfr} := \{r_1,r_2,\dots,r_t\}$ and that $L\in\cl_{{\bf
r}}.$ Let $L'$ be the matrix obtained by deleting column $i$ from $L$. Then
$L'\in\cl_{{\bfr'}}$ where ${\bfr'} := \{r_1,r_2,\dots, r_{i-1},
r_{i+1},\dots, r_t\}$.

{\rm(ii)} Suppose that ${\bfc} := \{c_1,c_2,\dots,c_t\}$ and that
$U\in\cu_{{\bfc}}.$ Let $U'$ be the matrix obtained by deleting row $i$ from
$U$. Then $U'\in\cu_{{\bfc'}}$ where ${\bfc'} := \{c_1,c_2,\dots, c_{i-1},
c_{i+1},\dots, c_t\}$.

\end{lemma}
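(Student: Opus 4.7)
The plan is to verify both parts directly from the definitions of the classes $\cl_{\bfr}$ and $\cu_{\bfc}$ given in the Echelon forms subsection. Part (i) amounts to a bookkeeping check after re-indexing the columns, and part (ii) will follow from (i) by transposition, since (as is immediate from the definitions) a matrix $L$ of size $m\times t$ lies in $\cl_{\bfr}$ if and only if its transpose $L^T$ lies in $\cu_{\bfr}$.

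For part (i), write $L = (l_{jk})$ and let $L' = (l'_{jk})$ be the $m\times (t-1)$ matrix obtained by deleting column $i$, so that $l'_{jk} = l_{jk}$ for $k<i$ and $l'_{jk} = l_{j,k+1}$ for $k\ge i$. Set $r'_k := r_k$ for $k<i$ and $r'_k := r_{k+1}$ for $k\ge i$, matching the given $\bfr'$. I need to check, for each $k = 1,\dots,t-1$, that $l'_{r'_k, k} \ne 0$ and $l'_{j,k} = 0$ whenever $j < r'_k$. For $k<i$, both conditions are inherited unchanged from the membership $L\in\cl_{\bfr}$ because the $k$-th column of $L'$ is simply the $k$-th column of $L$, whose leading nonzero entry sits at row $r_k = r'_k$. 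For $k\ge i$, the $k$-th column of $L'$ is the $(k+1)$-st column of $L$, whose leading nonzero entry sits at row $r_{k+1} = r'_k$; again the two required conditions transfer verbatim. Hence $L'\in \cl_{\bfr'}$.

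For part (ii), suppose $U\in\cu_{\bfc}$ and let $U'$ be obtained from $U$ by deleting row $i$. Then $U^T\in\cl_{\bfc}$, and deleting row $i$ of $U$ corresponds to deleting column $i$ of $U^T$; applying part (i) to $U^T$ yields $(U')^T \in \cl_{\bfc'}$, which gives $U'\in\cu_{\bfc'}$ as required. The entire argument is mechanical; the only point requiring care is the re-indexing of pivot positions when passing from $\bfr$ to $\bfr'$, and I foresee no substantive obstacle.
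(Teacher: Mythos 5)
Your verification is correct: the re-indexing of pivot rows in part (i) and the transposition argument for part (ii) both check out against the definitions of $\cl_{\bfr}$ and $\cu_{\bfc}$. The paper simply declares this lemma obvious, so your write-up is just an explicit version of the same immediate bookkeeping.
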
 

\begin{proof} Obvious. 
\end{proof}


The next results consider the effect of performing 
a Neville elimination move; that is, 
the row operation of
subtracting a suitable multiple of row $s$ from row $s+1$ on the minors of a
matrix of the form 
\[
\left( 
    \begin{array}{ccc|cccc} 
    \star &\cdots&\star&\star&\cdots&\cdots&\star\\
    \vdots&&\vdots&\vdots&&&\vdots\\
    \star &\cdots&\star&\star&\cdots&\cdots&\star\\
    \hline 
    0&\cdots&0&a_{st}&a_{s ,t+1}&\cdots&a_{sn}\\
    0&\cdots&0&a_{s+1 ,t}&a_{s+1 ,t+1}&\cdots&a_{s+1 ,n}\\
    0&\cdots&0&0&\star&\cdots&\star\\
    \vdots&&\vdots&\vdots&&&\vdots\\
    0&\cdots&0&0&\star&\cdots&\star\\
    \end{array} 
\right) 
\] 
when $a_{st}$ and $a_{s+1 ,t}$ are nonzero, in order to clear the entry 
in position $(s+1,t)$. Note that the resulting matrix has the form 
\[
\left( 
    \begin{array}{ccc|cccc} 
    \star &\cdots&\star&\star&\cdots&\cdots&\star\\
    \vdots&&\vdots&\vdots&&&\vdots\\
    \star &\cdots&\star&\star&\cdots&\cdots&\star\\
    \hline 
    0&\cdots&0&a_{st}&a_{s ,t+1}&\cdots&a_{sn}\\
    0&\cdots&0&0&b_{s+1 ,t+1}&\cdots&b_{s+1 ,n}\\
    0&\cdots&0&0&\star&\cdots&\star\\
    \vdots&&\vdots&\vdots&&&\vdots\\
    0&\cdots&0&0&\star&\cdots&\star\\
    \end{array} 
\right) 
\]

\begin{lemma}\label{lemma-nevillestep} 
Suppose that $A=(a_{ij})$ with $a_{st}\neq 0$ and $a_{ij}=0$ whenever $i\geq
s$ and $j<t$. Suppose that $a_{s+1, t}\neq 0$ while 
$a_{s+w , t}=0$ for all $w>1$. Set $B=(b_{ij})$ where
$b_{ij}=a_{ij}$ for $i\neq s+1$ while $b_{s+1  ,j} = a_{s+1, j} - a_{s+1 ,
t}a_{st}^{-1}a_{sj}$ for all $j$. In particular, $b_{s+1,j}=0$ for $j\le t$ while $b_{s+1,j} =[s , s+1|tj]_A a_{st}^{-1}$ for $j>t$. 

Then 
$$[I|J]_B= \begin{cases}  [I |J]_A &{\rm when\;} s\in I {\rm \;or\;} s+1\not\in I   \\ 
      [I |J]_A - a_{s+1 ,t}a_{st}^{-1}[I\backslash\{s+1\}\sqcup\{s\} | J]_A
      &{\rm when\;} s\not\in I {\rm \;and\;} s+1\in I \,.  \end{cases}$$
\end{lemma}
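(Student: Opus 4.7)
The plan is to exploit the fact that $B$ differs from $A$ only in row $s+1$, which has been replaced by (row $s+1$ of $A$) $-\,\alpha\cdot$ (row $s$ of $A$), where $\alpha := a_{s+1,t}a_{st}^{-1}$. The explicit formulas for $b_{s+1,j}$ are routine direct computations from the definition: when $j<t$, the hypotheses force $a_{s+1,j} = a_{sj} = 0$, so $b_{s+1,j} = 0$; when $j = t$, substitution gives $a_{s+1,t} - a_{s+1,t}a_{st}^{-1}a_{st} = 0$; and when $j > t$, the identity $[s,s+1 \mid t,j]_A = a_{st}a_{s+1,j} - a_{sj}a_{s+1,t}$ rearranges to $b_{s+1,j} = [s,s+1 \mid t,j]_A\,a_{st}^{-1}$.

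For the minor formula, I would carry out a case analysis on how the pair $\{s,s+1\}$ meets $I$. If $s+1 \notin I$, then $B(I,J) = A(I,J)$ verbatim, and the claim is immediate. If both $s$ and $s+1$ lie in $I$, then the submatrix $B(I,J)$ is obtained from $A(I,J)$ by subtracting $\alpha$ times its row indexed by $s$ from its row indexed by $s+1$ --- an elementary row operation internal to the submatrix --- so the determinants agree.

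The only interesting case is $s \notin I$ and $s+1 \in I$, where the extra term arises. By multilinearity of the determinant in the row indexed by $s+1$, one obtains $[I|J]_B = [I|J]_A - \alpha\det M$, where $M$ agrees with $A(I,J)$ except that the row indexed by $s+1$ has been replaced by row $s$ of $A$ (restricted to columns $J$). The one bookkeeping point I would expect to be the main obstacle is confirming that $M$ equals $A(I',J)$ with $I' := I\setminus\{s+1\}\cup\{s\}$ already in ascending order, so that no sign correction from a row permutation is required. To see this, write $I = \{i_1 < \cdots < i_k\}$ with $i_p = s+1$ and observe that $s \notin I$ together with $i_{p-1} < s+1$ forces $i_{p-1} \leq s-1$, so that the sequence $i_1 < \cdots < i_{p-1} < s < i_{p+1} < \cdots < i_k$ is strictly increasing, identifying $M$ with $A(I',J)$ as required. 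The formula $[I|J]_B = [I|J]_A - \alpha[I'|J]_A$ then follows directly.
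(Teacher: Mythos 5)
Your proposal is correct and fills in, via multilinearity of the determinant in the modified row and the observation that $I\setminus\{s+1\}\cup\{s\}$ is already in ascending order, exactly the computation the paper dismisses as ``Obvious from the definition of $B$.'' The approach is the same; you have simply written out the details the authors omitted.
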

 
\begin{proof} 
Obvious from the definition of $B$. 
\end{proof} 

We refer to the change from $A$ to $B$ described in this lemma as a {\em
Neville elimination move}. The next result shows that the totally nonnegative
condition is preserved under a Neville elimination move. This result may be
well-known, but we have been unable to find a clear statement in the
literature.

\begin{proposition} \label{proposition-tnn} 
In the above setting, if $A$ is totally nonnegative then so is $B$.
\end{proposition}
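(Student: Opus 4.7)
The plan is to use Lemma~\ref{lemma-nevillestep} to reduce the total nonnegativity of $B$ to proving that
\[
a_{st}\,[I|J]_A \;\geq\; a_{s+1,t}\,[I'|J]_A
\]
whenever $s+1\in I$, $s\notin I$, $|I|=|J|$, and $I' := (I\setminus\{s+1\})\cup\{s\}$; for all other minors, $[I|J]_B = [I|J]_A$ is nonnegative by hypothesis.

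Let $q:=|\{j\in J : j<t\}|$ and let $k$ denote the position of $s+1$ in the sorted listing of $I$. The zero block in $A$ imposed by the hypothesis $a_{ij}=0$ for $i\ge s,\,j<t$ forces both $[I|J]_A$ and $[I'|J]_A$ to vanish when $q\ge k$; when $q=k-1$ the submatrices factor block-triangularly and the inequality reduces to the southeast-block analogue with $q=0$ and $k=1$; intermediate values of $q$ can be dealt with by further Laplace expansion along the columns of $J$ below $t$. The case $t\in J$ is reduced by column-$t$ expansion to the same inequality on strictly smaller index sets.

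The principal case is $t\notin J$ with $q=0$. Setting $\Itil := I\cup\{s\}$ and $\Jtil := J\cup\{t\}$, the $(|I|+1)\times(|I|+1)$ minor $[\Itil\,|\,\Jtil]_A$ expanded along the column indexed by $t$ (which sits first in $\Jtil$) simplifies, by virtue of $a_{s+w,t}=0$ for $w>1$, to
\[
[\Itil\,|\,\Jtil]_A \;=\; \sum_{r=1}^{k-1}(-1)^{r+1}a_{i_r,t}\,[\Itil\setminus\{i_r\}\,|\,J]_A \;+\; (-1)^{k+1}\bigl(a_{st}[I|J]_A - a_{s+1,t}[I'|J]_A\bigr).
\]
When $k=1$ the sum is empty and gives $a_{st}[I|J]_A - a_{s+1,t}[I'|J]_A = [\Itil\,|\,\Jtil]_A \ge 0$ directly from total nonnegativity of $A$, completing that base case.

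The hard part will be the case $k\ge 2$: the auxiliary summands in the identity above carry alternating signs, so the target quantity is not manifestly expressed as a sum of nonnegative terms. I would aim to close the argument by induction---either on $k$, using a parallel Neville-type reduction applied to the minors $[\Itil\setminus\{i_r\}\,|\,\Jtil]_A$, or on the dimensions of $A$, by peeling off rows or columns disjoint from $I$, $J$, and the pivot row $s$. The residual corner case $|I|=m-1$, $|J|=n-1$ with $m=n$ (where no row or column can be peeled off) would then have to be handled separately, most likely via the Desnanot--Jacobi identity or a Lindstr\"om--Gessel--Viennot path-swap argument exploiting the planar network representation of the totally nonnegative matrix $A$.
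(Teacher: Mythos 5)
Your reduction of the claim to the inequality $a_{st}[I|J]_A \ge a_{s+1,t}[I'|J]_A$ is correct, and your base case is sound: when $s+1$ is the smallest element of $I$ (your $k=1$) the column-$t$ expansion of $[\,I\cup\{s\}\mid J\cup\{t\}\,]_A$ collapses to exactly the target quantity, which is therefore nonnegative. This is precisely case (1) of the paper's argument. But the heart of the proposition is the case you explicitly leave open, $k\ge 2$, i.e.\ when $I$ contains some index $i<s$. There your Laplace expansion produces auxiliary terms $(-1)^{r+1}a_{i_r,t}[\,\widetilde I\setminus\{i_r\}\mid J\,]_A$ with alternating signs, and none of the routes you list for controlling them (induction on $k$ via a parallel Neville reduction, peeling off rows and columns, Desnanot--Jacobi, or a path-swap argument) is carried out or obviously workable; in particular ``peeling off rows disjoint from $I$, $J$'' does nothing when $I$ and $J$ already meet every row and column, and that is not a rare corner case. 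As written, the proof establishes the proposition only for minors whose row set has $s+1$ as its least element.

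For comparison, the paper closes exactly this case by a different device: an induction on the \emph{size} of the minor of $B$, combined with the three-term Laplace (Pl\"ucker) relation in the rows $\{i,s,s+1\}$ and two columns, extended by Muir's law of extensible minors to the full index sets. This yields
\[
[I\backslash\{i,s+1\}\sqcup\{s\}\mid Y]_B\,[I|J]_B
=[I\backslash\{s+1\}|Y]_B\,[I\backslash\{i\}\sqcup\{s\}\mid J]_B
+[I\backslash\{i\}|Y]_B\,[I\backslash\{s+1\}\sqcup\{s\}\mid J]_B,
\]
in which every factor on the right is nonnegative (the small ones by the induction on size, the large ones because their row sets contain $s$), and $Y\subseteq J$ can be chosen to make the left-hand coefficient strictly positive unless the correction term already vanishes. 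The key point your proposal is missing is some identity of this Pl\"ucker type that expresses a positive multiple of the target minor as a sum of manifestly nonnegative products, rather than an alternating sum. A secondary, lesser issue: your treatment of the intermediate values $1\le q\le k-2$ is also only asserted (``further Laplace expansion''), and runs into the same sign problem. I would recommend either adopting the three-term relation plus Muir's law, or genuinely developing one of your proposed inductions before claiming the result.
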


\begin{proof} 
It follows from the fact that $A$ is totally nonnegative 
and the definition of $B$ that each
$b_{ij}\geq 0$. Also, for any size minor, $[I|J]_B=[I|J]_A\geq 0$ whenever
$s\in I$ or $s+1\notin I$.

Suppose that $l\geq 2$ and that 
all minors of $B$ of size less than $l\times l$ are $\geq 0$. 
Let $[I|J]_B$ be an $l\times l$ minor. By the above remarks, we may assume that $s\not\in I$ and that $s+1\in I$.  
Consider the following cases: 
\begin{enumerate}
\item $s+1$ is the least entry in
$I$; 
\item $s+1\in I$, and there
exists $i\in I$ with $i <s$.
\end{enumerate}

In case (1), consider first the case where there is a $j\in J$ with $j\leq t$. 
Then the $j$th column of $B(I,J)$ is zero; so $[I|J]_B=0$. 
Otherwise, note that 
$$a_{st}[I|J]_B=[I\sqcup \{s\}|J\sqcup \{t\}]_B
=[I\sqcup \{s\}|J\sqcup \{t\}]_A\geq 0.$$
As $a_{st}>0$ it follows that 
$[I|J]_B\geq 0$.

Next, consider case (2). If $[I\backslash\{s+1\}\sqcup\{s\} | J]_A=0$ then 
$[I|J]_B=[I|J]_A\geq 0$, by the previous lemma; so we may assume that 
$$[I\backslash\{s+1\}\sqcup\{s\} | J]_B =
[I\backslash\{s+1\}\sqcup\{s\} | J]_A\neq 0.$$
Suppose that $[I\backslash\{i,s+1\}\sqcup\{s\} | Y]_B=0$ for all subsets $Y$
of $J$ with $|Y|=l-1$. 
Then $[I\backslash\{s+1\}\sqcup\{s\} | J]_B=0$, by Lemma~\ref{lemma-tnn}.
Thus, we may assume that there exists a subset $Y$ of $J$ with $|Y|=l-1$ and
$[I\backslash\{i,s+1\}\sqcup\{s\} | Y]_B> 0$. Suppose that
$J=Y\sqcup\{k\}$. Choose $j\in Y$. 

Apply the Laplace relation of
Lemma~\ref{lemma-laplace}(a) with $J_1=\{j\}$ 
and $J_2=\{j,k\}$ while $I=\{i,s,s+1\}$ to obtain 
\[
[i|j]_B[s ,s+1|jk]_B-[s|j]_B[i ,s+1|jk]_B +[s+1|j]_B[is|jk]_B=0.
\]
It follows that 
\[
[s|j]_B[i ,s+1|jk]_B=
[i|j]_B[s ,s+1|jk]_B+[s+1|j]_B[is|jk]_B \,.
\]

By using Muir's law of extensible minors (Lemma \ref{lemma-muir}), we may introduce the $l-2$ row 
indices from $I\backslash\{i,s+1\}$ and the $l-2$ column indices from 
$Y\backslash\{j\}$ to obtain
\begin{multline*} 
[I\backslash\{i,s+1\}\sqcup\{s\} | Y]_B[I|J]_B=  \\
 [I\backslash\{s+1\}|Y]_B[I\backslash\{i\}\sqcup\{s\} | J]_B
+
[I\backslash\{i\}|Y]_B[I\backslash\{s+1\}\sqcup\{s\} | J]_B \,.
\end{multline*}
Now, $[I\backslash\{i,s+1\}\sqcup\{s\} | Y]_B>0$, by assumption, 
and each of the four 
minors on the right side 
of this equation is $\geq 0$ (the two of size $l-1$ 
by the inductive hypothesis and the two of size $l$ because $s$ is in the 
row set of the minor). 
It follows that $[I|J]_B\geq 0$, as required
\end{proof}

\begin{remark} \label{remark-elementary} 
Let $E(s+1,s)$ be the matrix with $1$ in the $(s+1,s)$ position and 
zero elsewhere.
Note that, with the above notation, 
\[
B=(I-a_{s+1 ,t}a_{st}^{-1}E(s+1,s))A \qquad\text{and}\qquad
A=(I+a_{s+1 ,t}a_{st}^{-1}E(s+1,s))B.
\] 
Note also that $I+a_{s+1 ,t}a_{st}^{-1}E(s+1,s)$
is totally nonnegative. 
\end{remark}


\begin{subsec}{\bf The Modified Neville Algorithm} 

Let $A$ be an $m\times n$ totally nonnegative matrix of rank $t$. 
The following algorithm outputs an $LU$ decomposition of $A$, which, as we shall see, coincides with the one given in Theorem \ref{theorem-explicit}. \\

\noindent\underline{Input}\ \  Set $L:=I$, the identity $m\times m$ matrix,  
and $U:= A$. Note that $A=LU$. \\

\noindent\underline{Algorithm} \\

\noindent\underline{Step 1}\ \  If $U$ is in strictly upper echelon form then stop
and output $L$ and $U$. Otherwise, if there is a row of $U$ consisting
entirely of zeros, go to Step 2 and if not, then go to Step 3. \\

\noindent\underline{Step 2}\ \  Suppose that $L$ is of size $m\times w$ and $U$
of size $w\times n$, and that some row of $U$ is zero.
Choose $i$ as large as possible so that the $i$th row of $U$ is zero. Delete
row $i$ from $U$ and column $i$ from $L$ to obtain new matrices 
$L$ of size $m\times(w-1)$ and $U$ of size $(w-1)\times n$. Note that 
we still have $A=LU$, by Lemma~\ref{lemma-delete1}, and that 
$L$ and $U$ are still totally nonnegative. Go to Step 1. \\

\noindent\underline{Step 3}\ \  Suppose that all rows of $U$ are nonzero, but $U$ is not in upper echelon form. By Lemma \ref{lemma-cauchon}, the leftmost nonzero column of $U$ must have a nonzero entry in its uppermost position. Set $U= (u_{ij})$.

If the first column of $U$ has two or more nonzero
entries then set $t=1$. Otherwise, set $t>1$ so that the submatrix of $U$
consisting of the first $t-1$ columns is in upper echelon form, but that
consisting of the first $t$ columns is not. Then, in view of Lemma \ref{lemma-cauchon}, there is a largest integer $s$ such that $u_{st},u_{s+1 ,t}\neq 0$; moreover, $u_{ij}=0$ for $i\ge s$ and $j<t$. Perform a Neville elimination move on $U$ as in
Lemma~\ref{lemma-nevillestep}; that is, replace $U$ by
$(I-u_{s+1 ,t}u_{st}^{-1}E(s+1,s))U$; so that in the new $U$ we have 
$u_{s+1 ,t}=0$. At the same time, replace $L$
by $L(I+u_{s+1 ,t}u_{st}^{-1}E(s+1,s))$. 
Note that we still have $A=LU$, and that $U$ is totally nonnegative by
Proposition~\ref{proposition-tnn}, while $L$ is the product of two totally
nonnegative matrices and so is still totally nonnegative. 

Go to Step 1.
\end{subsec}

\begin{theorem} \label{theorem-output} 
The above algorithm outputs 
an $m\times t$
totally nonnegative matrix $L\in\cl_{{\bfr}}^*$\,, 
for some $\bfr= \{r_1,r_2,\dots,r_t\}$,
and a $t\times n$ totally
nonnegative matrix $U\in\cu_{{\bfc}}$\,, 
for some  ${\bfc}=\{c_1,c_2,\dots,c_t\}$, 
such that $A=LU$.
\end{theorem}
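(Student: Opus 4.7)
The plan is to identify three invariants preserved by every iteration of the algorithm, and then prove termination via a monovariant. The invariants are: (I1)~$A = LU$; (I2)~both $L$ and $U$ are totally nonnegative; (I3)~$L \in \cl_{\bfr}^*$ for some ascending index set $\bfr = \{r_1,\dots,r_w\}$, where $w$ is the current number of columns of $L$ (equivalently, rows of $U$).

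All three invariants hold at initialization, with $L = I_m \in \cl_{\{1,\dots,m\}}^*$. For Step~2, (I1) is Lemma~\ref{lemma-delete1}, (I2) follows because deleting a row or column preserves total nonnegativity, and (I3) comes from Lemma~\ref{lemma-delete2}(i) (the diagonal $1$'s surviving the deletion). For Step~3, $L$ is replaced by $L(I + c E(s+1,s))$ and $U$ by $(I - c E(s+1,s))U$ with $c := u_{s+1,t}u_{st}^{-1} \ge 0$. Since $E(s+1,s)^2 = 0$, the two elementary factors multiply to $I$, so (I1) is preserved. The updated $U$ is totally nonnegative by Proposition~\ref{proposition-tnn}, and the updated $L$ is a product of two totally nonnegative matrices (the right factor by Remark~\ref{remark-elementary}), hence totally nonnegative via the Cauchy--Binet identity. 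For (I3), the update to $L$ adds $c$ times column $s+1$ to column $s$; since $r_s < r_{s+1}$ and $L \in \cl_{\bfr}^*$, column $s+1$ of $L$ has zero entries in all rows $i \le r_s$, so the modified column $s$ still satisfies $l_{r_s,s} = 1$ and $l_{i,s} = 0$ for $i < r_s$, while the other columns are untouched.

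For termination, I would use the monovariant $\Psi := (w, -\Phi)$ in lex order, where $\Phi := \sum_{i=1}^w \phi_i$ and $\phi_i$ is the leading-nonzero column index of row $i$ of the current $U$, with the convention $\phi_i := n+1$ when row $i$ is zero. Step~2 strictly decreases $w$. Step~3 keeps $w$ fixed but strictly increases $\Phi$, because it zeros the previously nonzero entry $u_{s+1,t}$, does not touch $u_{s+1,j}$ for $j < t$ (which are already zero by the Step~3 setup), and leaves every other row unchanged; hence $\phi_{s+1}$ moves strictly to the right while every other $\phi_i$ is preserved. Since $\Psi$ takes values in the well-founded set $\{0,\dots,m\} \times \{-m(n+1),\dots,0\}$, the algorithm halts in finitely many steps.

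At termination, Step~1 guarantees that $U$ is in strictly upper echelon form; letting $c_1 < \cdots < c_t$ be its leading-nonzero columns, we obtain $U \in \cu_{\bfc}$ with $\bfc = \{c_1,\dots,c_t\}$. Combined with the surviving invariants, this yields $L \in \cl_{\bfr}^*$, $A = LU$, and both factors totally nonnegative, which is exactly the theorem. I expect the main obstacle to be the verification of (I3) under Step~3: one must use the ordering $r_s < r_{s+1}$ to see that the column-addition operation does not disturb the normalized echelon shape of $L$. The remaining ingredients are a direct assembly of earlier lemmas together with a standard monovariant argument for termination.
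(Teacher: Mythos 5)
Your invariant bookkeeping is correct and, in places, more careful than the paper's own treatment: the paper verifies $A=LU$ and total nonnegativity inside the algorithm description, merely asserts the echelon shapes without the column-operation computation you give for (I3) under Step 3, and does not address termination at all, whereas your lexicographic monovariant $(w,-\Phi)$ handles it cleanly. So everything you prove, you prove correctly.

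There is, however, one genuine gap. In the theorem, $t$ is not a free name for the final common dimension of $L$ and $U$: the algorithm's preamble fixes $t:=\rank(A)$, so the theorem asserts that the number of columns of the output $L$ (equivalently, of rows of the output $U$) equals the rank of $A$. Your proof produces $L$ of size $m\times w$ and $U$ of size $w\times n$ for some $w$, but never connects $w$ to $\rank(A)$; that identification is precisely the content of the paper's proof of this theorem, and it also matters because $\rank(A)=|\bfr|$ is part of the definition of the class $\cm_{\bfr,\bfc}$ used immediately afterwards. The inequality $\rank(A)\le w$ is immediate from $A=LU$, but $\rank(A)\ge w$ needs an argument: the paper applies the Cauchy--Binet identity (Lemma~\ref{lemma-cauchy-binet}) together with Lemma~\ref{lemma-LrUcinMrc}(i) to conclude
\[
[r_1,\dots,r_w\,|\,c_1,\dots,c_w]_A=[r_1,\dots,r_w\,|\,1,\dots,w]_L\,[1,\dots,w\,|\,c_1,\dots,c_w]_U\neq 0,
\]
since only the index set $K=\{1,\dots,w\}$ contributes to the Cauchy--Binet sum. (Alternatively, $L\in\cl_{\bfr}^*$ is injective and $U\in\cu_{\bfc}$ is surjective as linear maps through $\mathbb{R}^w$, so $\rank(LU)=w$.) Adding either argument closes the gap.
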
 

\begin{proof} The algorithm outputs totally nonnegative 
matrices $L$ and $U$, in
strictly lower and upper echelon forms, respectively, 
such that $A=LU$. Also, note that the leading entry in each column of $L$ 
is $1$. 
Suppose that $L\in\cl_{{\bfr}}^*$ and $U\in\cu_{{\bfc}}$ with 
${\bfr}=\{r_1,r_2,\dots,r_w\}$ and 
${\bfc}=\{c_1,c_2,\dots,c_w\}$,
As $L$ is an $m\times w$ matrix and $U$
is a $w\times n$ matrix, we have $t= \rank(A) \le w$. Moreover,
$$[r_1,r_2,\dots,r_w| c_1,c_2,\dots,c_w]_A 
= [r_1,r_2,\dots,r_w| 1,\dots,w]_L
[1,\dots,w| c_1,c_2,\dots,c_w]_U\neq 0,$$
by using the Cauchy-Binet identity; 
so $t\ge w$. Hence, $w=t$, as required.
\end{proof} 

The above theorem, combined with the results of Section \ref{section-LU}, yields the main result of the paper:

\begin{theorem} \label{theorem-tnn} 
Let $A$ be an $m\times n$ totally nonnegative matrix. 
Then there is a unique pair ${\bfr}$, ${\bfc}$ such that 
$A\in\cm_{{\bfr},{\bfc}}$. Further, there is then a unique pair 
$L\in\cl_{{\bfr}}^*$, $U\in\cu_{{\bfc}}$ such that $A=LU$. The matrices 
$L$ and $U$ are totally nonnegative.  They are given explicitly in Theorem {\rm\ref{theorem-explicit}}.
\end{theorem}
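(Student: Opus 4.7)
The proof is essentially a matter of assembling the machinery already developed: existence comes from the Modified Neville Algorithm, and uniqueness comes from Section \ref{section-LU}.

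My plan is to start by running the Modified Neville Algorithm on the given totally nonnegative matrix $A$. By Theorem \ref{theorem-output}, the algorithm outputs totally nonnegative matrices $L \in \cl_\bfr^*$ and $U \in \cu_\bfc$, for some $\bfr = \{r_1,\dots,r_t\}$ and $\bfc = \{c_1,\dots,c_t\}$ (where $t = \rank(A)$), such that $A = LU$. Applying Lemma \ref{lemma-LrUcinMrc}(iii) to this factorization immediately gives $A = LU \in \cm_{\bfr,\bfc}$, establishing the existence of a pair $(\bfr,\bfc)$ with $A \in \cm_{\bfr,\bfc}$.

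Next I would invoke Remark \ref{remark-uniqueclass} to conclude that the pair $(\bfr,\bfc)$ with $A \in \cm_{\bfr,\bfc}$ is unique, since the remark states that a matrix belongs to at most one class $\cm_{\bfr,\bfc}$. With $(\bfr,\bfc)$ thus fixed, Theorem \ref{theorem-uniquelu} provides uniqueness of the factors: there is exactly one pair $L \in \cl_\bfr^*$, $U \in \cu_\bfc$ with $A = LU$, namely the pair given by the explicit formulas of Theorem \ref{theorem-explicit}.

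It remains to note that the $L$ and $U$ produced by the Modified Neville Algorithm (which are totally nonnegative) must coincide with the explicit $L$ and $U$ of Theorem \ref{theorem-explicit}, by the uniqueness just established. Hence the explicit factors of Theorem \ref{theorem-explicit} are themselves totally nonnegative, completing the proof. There is no real obstacle here; the only point requiring any care is to make sure one applies the uniqueness theorem of Section \ref{section-LU} to identify the algorithmic factors with the explicit ones, so that total nonnegativity transfers from the algorithmic output to the formulas.
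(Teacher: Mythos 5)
Your proposal is correct and follows exactly the argument the paper gives: existence of the class and the factorization from Theorem \ref{theorem-output} together with Lemma \ref{lemma-LrUcinMrc}, uniqueness of $(\bfr,\bfc)$ from Remark \ref{remark-uniqueclass}, and uniqueness of $L$ and $U$ from Theorem \ref{theorem-uniquelu}, with total nonnegativity transferred to the explicit formulas by identifying them with the algorithmic output. Nothing is missing.
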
 

\begin{proof} By Theorem \ref{theorem-output}, there exist $\bfr$, $\bfc$ and totally nonnegative matrices $L\in\cl_{{\bfr}}^*$, $U\in\cu_{{\bfc}}$ such that $A=LU$, and $A\in \cm_{\bfr,\bfc}$ by Lemma \ref{lemma-LrUcinMrc}. As noted in Remark \ref{remark-uniqueclass}, $\bfr$ and $\bfc$ are uniquely determined by $A$. The uniqueness of $L$ and $U$ then follows from Theorem \ref{theorem-uniquelu}.
\end{proof} 

Theorem \ref{theorem-explicit} and the total nonnegativity of the factors $L$ and $U$ are known for the case where $A$ is a totally nonnegative 
nonsingular square matrix; see, for example, 
\cite[Theorem 2.10 and Proposition 2.11]{pinkus}.
However, we have not been able to locate a 
prior source for the result just proved.


\section{Examples} 

\begin{example}
We first illustrate the modified Neville algorithm at work on the example considered by Cryer, 
\cite[Page 91]{cryer1}. The matrix in question is 
\[
A:= 
\left( 
    \begin{array}{ccc} 
    0&0&0\\
    1&0&1\\
    1&0&1
    \end{array} 
\right)
\]
Cryer exhibits two distinct $LU$ factorisations of $A$ into 
totally nonnegative factors: 
\[
A= 
\left( 
    \begin{array}{ccc} 
    0&0&0\\
    1&0&1\\
    1&0&1
    \end{array} 
\right)
=
\left( 
    \begin{array}{ccc} 
    0&0&0\\
    1&0&0\\
    1&0&0
    \end{array} 
\right)
\left( 
    \begin{array}{ccc} 
    1&0&1\\
    0&0&0\\
    0&0&0
    \end{array} 
\right)
=
\left( 
    \begin{array}{ccc} 
    0&0&0\\
    1&1&0\\
    1&1&0
    \end{array} 
\right)
\left( 
    \begin{array}{ccc} 
    1&0&0\\
    0&0&1\\
    0&0&1
    \end{array} 
\right)
\]

It is easy to check that $A$ is totally nonnegative of rank one, and that 
$A$ belongs to the class $\cm_{{\{2\}},{\{1\}}}$. 
We start the algorithm 
with the pair $\{I,A\}$: 

\begin{eqnarray*} A=IA&=&
\left( 
    \begin{array}{ccc} 
    1&0&0\\
    0&1&0\\
    0&0&1
    \end{array} 
\right)
\left( 
    \begin{array}{ccc} 
    0&0&0\\
    1&0&1\\
    1&0&1
    \end{array} 
\right)
=
\left( 
    \begin{array}{cc} 
    0&0\\
    1&0\\
    0&1
    \end{array} 
\right)
\left( 
    \begin{array}{ccc} 
    1&0&1\\
    1&0&1
    \end{array} 
\right)
\\[1ex]
&=&
\left( 
    \begin{array}{cc} 
    0&0\\
    1&0\\
    1&1
    \end{array} 
\right)
\left( 
    \begin{array}{ccc} 
    1&0&1\\
    0&0&0
    \end{array} 
\right)
=
\left( 
    \begin{array}{cc} 
    0\\
    1\\
    1
    \end{array} 
\right)
\left( 
    \begin{array}{ccc} 
    1&0&1\\
        \end{array} 
\right)
\end{eqnarray*} 
and one can easily check that 
\[
A=\left( 
    \begin{array}{cc} 
    0\\
    1\\
    1
    \end{array} 
\right)
\left( 
    \begin{array}{ccc} 
    1&0&1\\
        \end{array} 
\right)
\]
is (essentially) the unique decomposition of $A$ as a product 
of a $3\times 1$ matrix and a $1\times 3$ matrix. 
\end{example}


\begin{example}
A more complicated example. The algorithm reveals the class of $A$. 

\begin{eqnarray*} 
A = IA &=& \left( 
    \begin{array}{cccc} 
    1&0&0&0\\
    0&1&0&0\\
    0&0&1&0\\
    0&0&0&1
    \end{array} 
\right)
\left( 
    \begin{array}{cccc} 
    0&1&2&1\\
    0&2&4&2\\
    0&1&2&3\\
    0&3&6&11
    \end{array} 
\right)
=
\left( 
    \begin{array}{cccc} 
    1&0&0&0\\
    0&1&0&0\\
    0&0&1&0\\
    0&0&3&1
    \end{array} 
\right)
\left( 
    \begin{array}{cccc} 
    0&1&2&1\\
    0&2&4&2\\
    0&1&2&3\\
    0&0&0&2
    \end{array} 
\right)\\
&=&
\left( 
    \begin{array}{cccc} 
    1&0&0&0\\
    0&1&0&0\\
    0&1/2&1&0\\
    0&3/2&3&1
    \end{array} 
\right)
\left( 
    \begin{array}{cccc} 
    0&1&2&1\\
    0&2&4&2\\
    0&0&0&2\\
    0&0&0&2
    \end{array} 
\right)
=
\left( 
    \begin{array}{cccc} 
    1&0&0&0\\
    2&1&0&0\\
    1&1/2&1&0\\
    3&3/2&3&1
    \end{array} 
\right)
\left( 
    \begin{array}{cccc} 
    0&1&2&1\\
    0&0&0&0\\
    0&0&0&2\\
    0&0&0&2
    \end{array} 
\right)\\
&=&
\left( 
    \begin{array}{ccc} 
    1&0&0\\
    2&0&0\\
    1&1&0\\
    3&3&1
    \end{array} 
\right)
\left( 
    \begin{array}{cccc} 
    0&1&2&1\\
    0&0&0&2\\
    0&0&0&2
    \end{array} 
\right)
=
\left( 
    \begin{array}{ccc} 
    1&0&0\\
    2&0&0\\
    1&1&0\\
    3&4&1
    \end{array} 
\right)
\left( 
    \begin{array}{cccc} 
    0&1&2&1\\
    0&0&0&2\\
    0&0&0&0
    \end{array} 
\right)\\
&=&
\left( 
    \begin{array}{cc} 
    1&0\\
    2&0\\
    1&1\\
    3&4
    \end{array} 
\right)
\left( 
    \begin{array}{cccc} 
    0&1&2&1\\
    0&0&0&2\\
    \end{array} 
\right) .
\end{eqnarray*}
It follows that $A$ is in $\cm_{{\{1,3\}},{\{2,4\}}}$. 
\end{example}


\section{Appendix: Matrix identities, etc.} \label{section-identities}


For any index sets $I$ and $J$, set $\ell(I,J):=\left|\{(i,j) \in I\times J \mid i>j\}\right|$.


 \begin{lemma} \label{lemma-laplace} 
{\rm (Laplace relations; see, for example, \cite[p14]{mm}, \cite[eqn.~(3.3.4), p26]{veda})} 
Let $A$ be an $m\times n$ matrix, $I\subseteq \{1,\dots,m\}$, and $J\subseteq
\{1,\dots,n\}$. \\

{\rm (a)} If $J_1,J_2\subseteq \{1,\dots,n\}$ with $|J_1|+|J_2| =|I|$,
then
$$\sum_{\substack{ I_1\sqcup I_2=I\\ |I_\nu|=|J_\nu|}}
(-1)^{\ell(I_1;I_2)} [I_1| J_1]_A [I_2| J_2]_A =\begin{cases}
(-1)^{\ell(J_1;J_2)}[I| J_1\sqcup J_2]_A &\quad (J_1\cap J_2=
\varnothing)\\ 0 &\quad (J_1\cap J_2\ne \varnothing). \end{cases}$$

{\rm (b)} If $I_1,I_2\subseteq \{1,\dots,n\}$ with  $|I_1|+|I_2| =|J|$,
then
$$\sum_{\substack{J_1\sqcup J_2=J\\ |J_\nu|=|I_\nu|}}
(-1)^{\ell(J_1;J_2)} [I_1| J_1]_A [I_2| J_2]_A =\begin{cases} 
(-1)^{\ell(I_1;I_2)}[I_1\sqcup I_2| J]_A &\quad (I_1\cap I_2=
\varnothing)\\ 0 &\quad (I_1\cap I_2\ne \varnothing). \end{cases}$$
 \end{lemma}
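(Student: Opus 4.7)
The plan is to reduce part (a) to the classical Laplace expansion of a single determinant, and then deduce part (b) by transposition. Set $p := |I|$, $q_\nu := |J_\nu|$, so that $p = q_1 + q_2$, and introduce the auxiliary $p \times p$ matrix $N$ whose rows are the rows of $A$ indexed by $I$ (listed in the increasing order dictated by the paper's convention), and whose columns are, from left to right, the columns of $A$ indexed by $J_1$ (in increasing order) followed by those indexed by $J_2$ (in increasing order). The LHS of (a) will arise as a Laplace expansion of $\det N$ along its first $q_1$ columns, and the RHS will arise from a direct evaluation of $\det N$.

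First I would evaluate $\det N$. If $J_1 \cap J_2 \ne \varnothing$, then $N$ has two identical columns, so $\det N = 0$. If $J_1 \cap J_2 = \varnothing$, then the columns of $N$ differ from the columns of $A(I, J_1 \sqcup J_2)$ only by a permutation, namely the one that merges the two already-sorted sequences $J_1$ and $J_2$ into sorted order. Its inversion count is precisely the number of pairs $(j_1,j_2) \in J_1 \times J_2$ with $j_1 > j_2$, i.e.\ $\ell(J_1; J_2)$, so $\det N = (-1)^{\ell(J_1;J_2)} [I \mid J_1 \sqcup J_2]_A$.

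Next I would apply the standard Laplace expansion of $\det N$ along its first $q_1$ columns:
\[
\det N \;=\; \sum_{\substack{R \subseteq \{1,\dots,p\}\\|R|=q_1}} (-1)^{\sigma(R)} \, \det\bigl(N_{R,\{1,\dots,q_1\}}\bigr)\, \det\bigl(N_{R^c,\{q_1+1,\dots,p\}}\bigr),
\]
with sign $\sigma(R) = \sum_{k \in R} k + \sum_{j=1}^{q_1} j$. Writing $I = \{i_1 < \cdots < i_p\}$, the order-preserving bijection $k \mapsto i_k$ identifies each $R$ with a subset $I_1 \subseteq I$ of size $q_1$, and setting $I_2 := I \setminus I_1$ gives $N_{R,\{1,\dots,q_1\}} = A(I_1, J_1)$ and $N_{R^c,\{q_1+1,\dots,p\}} = A(I_2, J_2)$, so these two minors are $[I_1|J_1]_A$ and $[I_2|J_2]_A$ respectively. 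Equating the two expressions for $\det N$ yields (a), provided the signs match.

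Matching the signs is the only genuinely technical step, and the intended main obstacle. A direct count shows
\[
\ell(R; R^c) \;=\; \sum_{k \in R} k \;-\; \binom{q_1+1}{2},
\]
obtained by summing, for each $k_\alpha \in R$, the quantity $k_\alpha - \alpha$ (the number of elements of $R^c$ less than $k_\alpha$). Consequently $(-1)^{\sigma(R)} = (-1)^{\ell(R; R^c)}$. Because the bijection $k \mapsto i_k$ preserves order, it also preserves the statistic $\ell$, so $\ell(R; R^c) = \ell(I_1; I_2)$, completing the proof of (a). Part (b) is then obtained by applying (a) to $A^T$, using $[X|Y]_{A^T} = [Y|X]_A$ and relabeling: the roles of rows and columns swap, and the $\ell$-signs for $J_1, J_2$ and for $I_1, I_2$ exchange their roles accordingly, giving exactly the identity in (b).
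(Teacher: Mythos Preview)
Your argument is correct: forming the auxiliary matrix $N$, evaluating $\det N$ directly, and then expanding $\det N$ by Laplace along its first $q_1$ columns is exactly the right reduction, and your sign bookkeeping (the identity $\ell(R;R^c)=\sum_{k\in R}k-\binom{q_1+1}{2}$ and the order-preserving transfer $\ell(R;R^c)=\ell(I_1;I_2)$) is accurate. The transposition argument for (b) is also fine.

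As for comparison with the paper: the paper does not prove this lemma at all. It is stated in the appendix as a classical identity with references to Marcus--Minc and Vein--Dale, and is used as a black box. So you have supplied a self-contained derivation where the paper simply cites the literature; your route is the standard one (reduce to the ordinary Laplace expansion of a single determinant), and there is nothing to contrast on the paper's side.
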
 


 \begin{lemma}\label{lemma-tnn} 
Let $A$ be an $m\times n$ matrix, $I\subseteq \{1,\dots,m\}$, and $J\subseteq
\{1,\dots,n\}$, with
$|I|=|J|$.\\

{\rm (a)} Fix $J_1\subseteq J$. If $[I_1| J_1]_A =0$ for all
$I_1\subseteq I$ with $|I_1|= |J_1|$, then $[I| J]_A =0$.

{\rm (b)} Fix $I_1\subseteq I$. If $[I_1| J_1]_A =0$ for all
$J_1\subseteq J$ with $|J_1|= |I_1|$, then $[I| J]_A =0$.
 \end{lemma}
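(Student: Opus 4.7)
The plan is to derive both parts of the lemma as immediate consequences of the Laplace relations recorded in Lemma \ref{lemma-laplace}, with part (a) using the row/column split from \ref{lemma-laplace}(a) and part (b) using the symmetric split from \ref{lemma-laplace}(b).

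For part (a), I would set $J_2 := J \setminus J_1$, so that $J_1 \cap J_2 = \varnothing$ and $|J_1| + |J_2| = |J| = |I|$. The hypothesis of Lemma \ref{lemma-laplace}(a) is then satisfied for the index set $I$ and column sets $J_1, J_2$. Applying that identity yields
\[
\sum_{\substack{ I_1\sqcup I_2=I\\ |I_\nu|=|J_\nu|}}
(-1)^{\ell(I_1;I_2)} [I_1| J_1]_A [I_2| J_2]_A \;=\; (-1)^{\ell(J_1;J_2)}\,[I| J_1\sqcup J_2]_A \;=\; \pm [I|J]_A.
\]
Every term on the left-hand side involves an $[I_1|J_1]_A$ with $I_1 \subseteq I$ and $|I_1| = |J_1|$, which vanishes by assumption. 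Hence the left-hand side is zero, forcing $[I|J]_A = 0$.

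Part (b) is entirely symmetric: set $I_2 := I \setminus I_1$ and apply Lemma \ref{lemma-laplace}(b) to the row sets $I_1, I_2$ and the column set $J$. Every summand contains a factor $[I_1|J_1]_A$ with $J_1 \subseteq J$ and $|J_1| = |I_1|$, which vanishes by hypothesis, and the right-hand side equals $\pm [I|J]_A$.

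There is essentially no obstacle here: the lemma is just a dualized restatement of the Laplace expansion, so the whole argument amounts to choosing the correct partition ($J_1$ and its complement in $J$, or $I_1$ and its complement in $I$) and quoting Lemma \ref{lemma-laplace}. The only thing to be slightly careful about is that the sum on the left genuinely has \emph{every} term annihilated by the hypothesis, which is immediate because the summation ranges precisely over pairs with $|I_1| = |J_1|$ (respectively $|J_1| = |I_1|$), matching the hypothesis exactly.
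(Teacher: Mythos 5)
Your proof is correct and follows essentially the same route as the paper: both set $J_2 := J\setminus J_1$, invoke the Laplace relation of Lemma \ref{lemma-laplace}(a), and observe that every summand contains a factor $[I_1|J_1]_A$ that vanishes by hypothesis, with part (b) handled symmetrically. No issues.
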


\begin{proof} 
By symmetry, we need only prove (a). Set $J_2=J\backslash J_1$. There is a 
Laplace relation of the form 
\[
[I| J]_A =\sum_{I_1\sqcup I_2=I}\pm[I_1| J_1]_A [I_2| J_2]_A \,.
\]
As all $[I_1| J_1]_A =0$, by assumption, it follows that $[I| J]_A =0$.
\end{proof}



\begin{lemma} {\rm (Cauchy-Binet Identity; see, for example, \cite[eqn.~(6), p86]{aitken}, \cite[p14]{mm})}
 \label{lemma-cauchy-binet} 
Let $A$ be an $m\times t$ matrix and $B$ a $t\times n$ matrix, and 
let $I\subseteq \{1,\dots,m\}$ and $J\subseteq \{1,\dots,n\}$ be 
$k$-element sets with $k\leq t$. Then
\[
[I|J]_{AB}=\sum_K\, [I|K]_A[K|J]_B
\]
where $K$ ranges over all $k$-element subsets of $\{1,\dots,t\}$. 
\end{lemma}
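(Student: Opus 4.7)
The plan is to prove this by a direct expansion using the Leibniz (permutation) formula for the determinant. Write $I = \{i_1 < \cdots < i_k\}$ and $J = \{j_1 < \cdots < j_k\}$, and expand
\[ [I|J]_{AB} = \sum_{\pi \in S_k} \operatorname{sgn}(\pi) \prod_{p=1}^k (AB)_{i_p,\, j_{\pi(p)}}. \]
I would then substitute $(AB)_{i_p, j_q} = \sum_{l=1}^t a_{i_p, l}\, b_{l, j_q}$, distribute the products over the sums, and interchange the order of summation to rewrite the expression as
\[ \sum_{f\colon\{1,\dots,k\}\to\{1,\dots,t\}} \Bigl(\prod_{p=1}^k a_{i_p,\, f(p)}\Bigr) \sum_{\pi \in S_k} \operatorname{sgn}(\pi) \prod_{p=1}^k b_{f(p),\, j_{\pi(p)}}. \]

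The next step is to observe that when $f$ is not injective, the inner sum $\sum_\pi \operatorname{sgn}(\pi) \prod_p b_{f(p), j_{\pi(p)}}$ is the determinant of a matrix with two equal rows, hence vanishes. Only injective $f$ contribute. For each such $f$, let $K := f(\{1,\dots,k\}) = \{k_1 < \cdots < k_k\}$ be the image, and write $f(p) = k_{\sigma(p)}$ for the unique $\sigma \in S_k$ determined by $f$ and the chosen enumeration of $K$. The inner sum then rearranges, via the substitution $\pi \mapsto \pi \circ \sigma^{-1}$, into $\operatorname{sgn}(\sigma)\, [K|J]_B$.

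Finally, grouping the sum over $f$ first by the image $K$ and then over the $k!$ choices of $\sigma \in S_k$, the contribution from each $K$ is
\[ [K|J]_B \sum_{\sigma \in S_k} \operatorname{sgn}(\sigma) \prod_{p=1}^k a_{i_p,\, k_{\sigma(p)}} \;=\; [I|K]_A \cdot [K|J]_B, \]
where the last equality is again the Leibniz formula applied to the submatrix $A(I, K)$. Summing over all $k$-element subsets $K$ of $\{1,\dots,t\}$ yields the claimed identity.

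The only real obstacle is keeping the signs and indices consistent through the reindexing step $\pi \mapsto \pi \circ \sigma^{-1}$; everything else is routine expansion. Since this is a classical identity with standard references already supplied in the statement, one could alternatively just invoke those references rather than reproduce the short argument in full.
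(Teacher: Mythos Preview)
Your argument is correct: this is the standard derivation of Cauchy--Binet via the Leibniz expansion, and your handling of the non-injective $f$ (two equal rows) and the reindexing $\pi \mapsto \pi\circ\sigma^{-1}$ is right.

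As for comparison with the paper: there is none to make. The paper does not prove this lemma at all; it is placed in the appendix of classical identities and simply cited from Aitken and Marcus--Minc. You anticipated exactly this in your final remark, and that is in fact what the authors do.
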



\begin{lemma} {\rm (Muir's Law of Extensible Minors; see, for example, 
\cite[p179, \S 187]{muir}, \cite[p205]{brsc2})} \label{lemma-muir} 
Let $F$ be a field and suppose that 
$$\sum_{s=1}^d c_s[I_s|J_s][K_s|L_s]=0$$
is a homogeneous determinantal 
 identity for matrices over $F$. Suppose that $P$ is a set of row indices disjoint from each of the 
 sets $I_s$ and $Q$ is a set of column indices disjoint from each of the 
 sets $J_s$, with $|P|=|Q|$. Then 
\[ 
\sum_{s=1}^d c_s[I_s \sqcup P|J_s \sqcup Q]
[K_s \sqcup P|L_s \sqcup Q]=0 
\] 
is also a determinantal identity for matrices over $F$. 
\end{lemma}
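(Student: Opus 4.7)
The plan is to prove the law by induction on $|P|=|Q|$, reducing to the base case $|P|=|Q|=1$ and iterating; so write $P=\{p\}$, $Q=\{q\}$ with $p$ disjoint from every $I_s$ and $K_s$, and $q$ disjoint from every $J_s$ and $L_s$. Fix an arbitrary matrix $M$ whose row and column index sets contain $p$, $q$, and all indices appearing in the original identity, and set
\[
G(M) := \sum_{s=1}^d c_s\,[I_s\sqcup\{p\}\mid J_s\sqcup\{q\}]_M\,[K_s\sqcup\{p\}\mid L_s\sqcup\{q\}]_M\,.
\]
The goal is to show $G(M)=0$.

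I would proceed by expanding each extended minor via Laplace expansion along the new column $q$:
\[
[I_s\sqcup\{p\}\mid J_s\sqcup\{q\}]_M = \sum_{i\in I_s\sqcup\{p\}} \epsilon_s(i)\,M_{i,q}\,[I_s\sqcup\{p\}\setminus\{i\}\mid J_s]_M,
\]
and similarly for the $(K_s,L_s)$ factor. Substituting into $G(M)$ presents it as a polynomial of degree two in the entries $(M_{i,q})$ of column $q$, whose vanishing I would check monomial by monomial. The coefficient of $M_{p,q}^{\,2}$ takes the form $\sum_s c_s\,\epsilon_s(p)\epsilon_s'(p)\,[I_s\mid J_s]_M[K_s\mid L_s]_M$. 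The insertion signs $\epsilon_s(p),\epsilon_s'(p)$ depend only on the ranks of $p$ in $I_s\sqcup\{p\}$ and $K_s\sqcup\{p\}$ (and analogously for $q$); under the natural interpretation of \emph{homogeneous} --- namely that the multisets $I_s\sqcup K_s$ and $J_s\sqcup L_s$ are independent of $s$ --- their product $\epsilon_s(p)\epsilon_s'(p)$ collapses to a single global sign $\epsilon$, because the relevant parities $|\{x\in I_s\sqcup K_s: x<p\}|$ and $|\{y\in J_s\sqcup L_s: y<q\}|$ do not depend on $s$. Thus this coefficient equals $\epsilon\sum_s c_s[I_s\mid J_s]_M[K_s\mid L_s]_M = 0$ by the original identity.

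The main obstacle is the treatment of the remaining quadratic coefficients $M_{i_1,q}M_{i_2,q}$ with $(i_1,i_2)\ne(p,p)$. Each such coefficient again takes the shape of a linear combination $\sum_s c_s(\pm)(\pm)[I_s'\mid J_s][K_s'\mid L_s]$, where $I_s'$ and $K_s'$ are obtained from $I_s\sqcup\{p\}$ and $K_s\sqcup\{p\}$ by removing $i_1$ and $i_2$ respectively. The plan is to recognize these as instances of the original identity applied to a matrix obtained from $M$ by a row swap exchanging $p$ with $i_1$ or $i_2$; since the original identity holds for all matrices, these row-permuted versions hold as well, and the homogeneity hypothesis is invariant under internal swaps within the combined multiset $I_s\sqcup K_s$, so the same sign-alignment argument closes each case. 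An auxiliary Laplace expansion along row $p$ handles the mixed terms where $i_1\ne i_2$ lie on opposite sides. Once the base case $|P|=|Q|=1$ is established, adjoining row-column pairs one at a time --- each time preserving homogeneity --- completes the induction and yields the general statement.
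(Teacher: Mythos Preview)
The paper does not supply its own proof of Muir's law; the lemma is stated with citations to Muir and to Brualdi--Schneider, so there is nothing in the paper to compare your argument against directly. That said, the proof found in those references is quite different from yours. The classical argument substitutes into the given identity the matrix $B=(b_{ij})$ with $b_{ij}:=[\{i\}\sqcup P\mid \{j\}\sqcup Q]_A$, and then invokes Sylvester's identity (Lemma~\ref{lemma-sylvester}) to obtain $[I_s\mid J_s]_B = [I_s\sqcup P\mid J_s\sqcup Q]_A\cdot[P\mid Q]_A^{|I_s|-1}$; homogeneity makes the power of $[P\mid Q]_A$ uniform across $s$, so it factors out and the extended identity follows on the Zariski-open set where $[P\mid Q]_A\neq 0$, hence everywhere. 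This is short and bypasses all sign bookkeeping.

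Your approach is more elementary in that it avoids Sylvester, but what you have written is a sketch rather than a proof. The $M_{p,q}^{2}$ coefficient is handled correctly: your reading of ``homogeneous'' (that the multisets $I_s\sqcup K_s$ and $J_s\sqcup L_s$ are fixed) is the right one, and it does force $\epsilon_s(p)\epsilon'_s(p)$ to be a global sign. But the remaining coefficients are exactly where the work lies, and you have only announced a plan. For a mixed term $M_{i_1,q}M_{i_2,q}$ with $i_1\neq i_2$, the coefficient collects contributions from \emph{both} orderings (first factor at $i_1$, second at $i_2$, and vice versa), and the terms in which an $i_\nu$ actually appears depend on whether $i_\nu\in I_s$ or $i_\nu\in K_s$ for that particular $s$. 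Recognising the resulting two-part sum as the original identity evaluated at a row-swapped matrix requires checking that the Laplace signs $\epsilon_s(i_\nu),\epsilon'_s(i_\nu)$ and the reorder signs introduced by the swap combine, term by term, into a single global sign; this does hold, because each such sign product depends only on counts taken in the fixed multisets $I_s\sqcup K_s$ and $J_s\sqcup L_s$, but you have not verified it. Without that computation, ``the same sign-alignment argument closes each case'' is an assertion, not a proof. If you want to pursue this route, carry out the sign analysis explicitly for at least the cases $(i_1,i_2)=(p,i)$ and $(i_1,i_2)=(i,i)$ with $i\neq p$; otherwise, the Sylvester substitution is both quicker and the argument the cited sources actually give.
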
 


\begin{lemma}  {\rm (Sylvester's Identity; see, for example, 
\cite[p32]{gant}, \cite[eqn.~(8), p772]{brsc1})} \label{lemma-sylvester}
Let $A=(a_{ij})$ be an $n\times n$ matrix and let $m<n$. Set $B=(b_{ij})$ to be the $(n-m)\times(n-m)$ matrix
where $b_{ij} := [1,\dots, m ,m+i |1,\dots, m , m+j]$. Then, 
\[
\det(B)=[1,\dots, n |1,\dots, n]_A [1,\dots, m |1,\dots, m]_A^{n-m-1} \,.
\]
\end{lemma}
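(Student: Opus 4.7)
The plan is to establish Sylvester's Identity via a Schur-complement computation, and then remove the nondegeneracy hypothesis by a polynomial-identity argument. Write $A$ in block form
\[
A=\begin{pmatrix}P&Q\\R&S\end{pmatrix},
\]
where $P=A(\{1,\dots,m\},\{1,\dots,m\})$, so that $[1,\dots,m|1,\dots,m]_A=\det(P)$, and $S$ is of size $(n-m)\times(n-m)$.

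First I would treat the generic case $\det(P)\ne 0$. The standard Schur-complement formula gives $\det(A)=\det(P)\,\det(S-RP^{-1}Q)$. The key observation is that for each $i,j\in\{1,\dots,n-m\}$ the entry $b_{ij}=[1,\dots,m,m+i|1,\dots,m,m+j]_A$ is itself the determinant of an $(m+1)\times(m+1)$ submatrix whose leading $m\times m$ block is again $P$; applying the same Schur-complement formula at size $m+1$ to this submatrix yields $b_{ij}=\det(P)\cdot(S-RP^{-1}Q)_{ij}$. Hence as $(n-m)\times(n-m)$ matrices,
\[
B=\det(P)\cdot(S-RP^{-1}Q),
\]
and taking determinants gives
\[
\det(B)=\det(P)^{\,n-m}\,\det(S-RP^{-1}Q)=\det(P)^{\,n-m-1}\,\det(A),
\]
which is exactly the asserted identity.

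To remove the hypothesis $\det(P)\ne 0$, observe that, once $n$ and $m$ are fixed, both sides of the desired equation are a single polynomial expression in the entries of $A$ with integer coefficients. These two polynomials agree on the locus where $\det(P)\ne 0$, and $\det(P)$ is itself a nonzero polynomial in the generic matrix entries. The standard principle that a polynomial identity which holds generically holds universally (equivalently, one may specialize from the ring of polynomials in $n^2$ indeterminates to any commutative base ring) then yields the identity for arbitrary $A$.

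The only real obstacle is the bookkeeping inside the Schur-complement computation for $b_{ij}$: one must verify that the $(m+1)\times(m+1)$ submatrix defining $b_{ij}$ genuinely has $P$ as its leading block (which holds since rows and columns $1,\dots,m$ are common), and that the resulting scalar Schur complement coincides with the $(i,j)$-entry of the full Schur complement $S-RP^{-1}Q$. Once this is verified the power count $n-m-1$ falls out immediately, and no further calculation is required.
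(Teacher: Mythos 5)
The paper does not prove this lemma at all: it appears in the appendix as a quoted classical identity, with references to Gantmacher and to Brualdi--Schneider, so there is no internal proof to compare yours against. Your argument is correct and complete. The identification $b_{ij}=\det(P)\,(S-RP^{-1}Q)_{ij}$ is exactly right, since the submatrix defining $b_{ij}$ has leading block $P$, bordered by the $i$th row of $R$ and the $j$th column of $Q$ with corner entry $s_{ij}$, and its Schur complement with respect to $P$ is the scalar $(S-RP^{-1}Q)_{ij}$; the determinant count $\det(B)=\det(P)^{n-m}\det(S-RP^{-1}Q)=\det(P)^{n-m-1}\det(A)$ then follows because $B$ has size $n-m$. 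Your removal of the hypothesis $\det(P)\neq 0$ is also sound, provided it is run in the form you mention parenthetically: one should verify the identity for the generic matrix over $\mathbb{Z}[x_{11},\dots,x_{nn}]$, where $\det(P)$ is a nonzero element of an integral domain (so the identity, valid in the fraction field, descends to the polynomial ring), and then specialize; the looser phrasing about agreement ``on the locus where $\det(P)\neq 0$'' would not by itself suffice over a finite field, which matters here since the paper applies the lemma over an arbitrary field in Theorem \ref{theorem-explicit}.
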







\end{document}